\documentclass[11pt,letterpaper]{amsart}
\allowdisplaybreaks 
\belowdisplayskip
\abovedisplayskip

\usepackage{pgfplots}
\pgfplotsset{compat=1.18}  
\usetikzlibrary{intersections} 
\usepgfplotslibrary{fillbetween}
\usepackage{changes}

\usepackage{lmodern}
\usepackage[T1]{fontenc}

\makeatletter
\@namedef{subjclassname@2020}{%
  \textup{2020} Mathematics Subject Classification}
\makeatother
\usepackage[table,xcdraw]{xcolor}
\usepackage[a4paper,top=2.5cm,bottom=2.5 cm,left=2 cm,right=2 cm]{geometry}

\usepackage[utf8]{inputenc} 
\usepackage[english]{babel} 
\usepackage{fancyhdr}
\usepackage[toc,page]{appendix}
\usepackage{amsmath,amssymb,mathrsfs,mathrsfs,pifont,cancel,paralist,mathbbol,bbm}
\usepackage{amsthm}
\usepackage{mathtools}
\usepackage{caption, subcaption}
\usepackage{tabularx}
\usepackage{graphicx}
\usepackage{mathrsfs}
\usepackage{pdfcomment}
\usepackage{url}
\hypersetup{hidelinks}
\usepackage[a-1b]{pdfx}
\usepackage[T1]{fontenc}
\usepackage{bm}
\usepackage{enumitem}
\usepackage{comment}
\usepackage[all]{xy}
\usepackage{tikz}
\usepackage{tikz-cd}
\usepackage{mathbbol}
\usepackage{epigraph}
\usepackage{setspace}
\usepackage{ stmaryrd }
\usepackage{nicefrac,xfrac}
\usepackage{amsmath}
\usepackage{makecell} 
\usepackage{hyperref}

\definecolor{forestgreen(web)}{rgb}{0.13, 0.55, 0.13}

\newcommand{\commentoo}[1]{}

\newcommand{\N}{\mathbb{N}}
\newcommand{\Z}{\mathbb{Z}}

\newcommand{\R}{\mathbb{R}}
\newcommand{\C}{\mathbb{C}}
\renewcommand{\P}{\mathbb{P}}
\newcommand{\mfS}{\mathfrak{S}}
    
\newcommand{\mcal}{\mathcal}

\newcommand{\pol}[2]{\mathrm{Pol}_{#1}(#2)}

\newcommand{\ord}[1]{{\left| {#1}\right|}}

\newcommand{\psiabs}{\psi^{\mathrm{abs}}}
\theoremstyle{definition}
\newtheorem{defi}{Definition}[section]

\newtheorem{remark}[defi]{Remark}

\newtheorem*{notation}{Notation}
\newtheorem*{comm}{Comment}

\theoremstyle{plain}
\newtheorem{prop}[defi]{Proposition}

\newtheorem{lemma}[defi]{Lemma}
\newtheorem{theorem}[defi]{Theorem}

\title{Worse than square-root cancellation in Bateman--Horn's conjecture}
\author{Giacomo Bortolussi}
\address{Università di Roma Tor Vergata\\ Dipartimento di Matematica\\ 00133\\ Rome}
\email{bortolussi@axp.mat.uniroma2.it}
\begin{document}
\begin{abstract}
    We prove asymptotics for the average error term in Bateman--Horn's conjecture in the exponential range.
\end{abstract}
\maketitle

\setcounter{tocdepth}{1}
\tableofcontents

\section{Introduction}
 
Let $P\in \Z[t]$ be an irreducible polynomial with a positive leading coefficient. Let $\Lambda$ be the von Mangoldt function and let  $$\psi_P(x)\coloneqq\sum_{\substack{1\le n\le x\\P(n)>0}} \Lambda(P(n))\,.$$
The Bateman--Horn conjecture~\cite{BH} states that $\psi_P(x)=\mfS_Px+o(x)$ as $x\to \infty$,  where 
\begin{equation*}\mfS_P\coloneqq \prod_{\substack{\ell =2\\\ell\text{ prime}}}^\infty\left(1-\frac 1 \ell\right)^{-1}\left(1-\frac{\#\{m\in \Z/\ell\Z :P(m)=0 \}} \ell\right)\,.\end{equation*}
When $\deg P=1$, the conjecture reduces to Dirichlet's theorem on arithmetic progressions, but there is no $P$ with $\deg P\ge 2$ for which the conjecture is established. In light of this, there has been recent work on \lq\lq average\rq\rq\, versions. For fixed $d\in \N$ and any $H\ge 1$, let
 $$\pol{d}{H}\coloneqq\left\{P(t)=c_dt^d+\ldots+c_0\in \Z[t]: (c_d,\ldots,c_0)\in [-H,H]^{d+1},\, c_d>0\right\}.$$
 For $z>1$, define the truncated Bateman--Horn constant as \begin{equation}\label{def_tbh}
     \mfS_P(z)\coloneqq \prod_{\substack{ \ell < z\\\ell\text{ prime}}}\left(1-\frac 1 \ell\right)^{-1}\left(1-\frac{\#\{m\in \Z/\ell\Z :P(m)=0 \}}\ell\right)\,.
 \end{equation}
 We show that square-root cancellation in Bateman--Horn's conjecture fails on average for $x$ in a certain range. For example, taking $x=(\log H)^2$ and averaging over $P\in \pol{d}{H}$, Theorem~\ref{teo_main} gives $$\ord{\psi_P(x)-x\mfS_P(x)}\asymp \sqrt{x\log H}= x^{\frac{3}{4}}\,.$$

\begin{theorem}
    Fix $d\ge 2$ and $\delta>0$. For all $x,H>2$ with $x\le (\log H)^\delta$ and $x=x(H)\to \infty$, we have
    \begin{align*}\frac 1 {2^dH^{d+1}}\!\sum_{P\in \pol{d}{H}}\!\!\ord{\psi_P(x)-x\mfS_P(x)}^2=x\log H+O\left(x\sqrt{(\log H)(\log x)}+(\log H)\sqrt{x(\log x)}\right)\,,\end{align*}
    where the implied constant depends only on $d$ and $\delta$.
\label{teo_main} \end{theorem}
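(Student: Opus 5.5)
Expanding the square reduces the statement to computing, for $n,m\le x$, the joint moments of $\Lambda(P(n))$ and $\Lambda(P(m))$ over $P\in\pol{d}{H}$:
\[
\sum_{n,m\le x}\Lambda(P(n))\Lambda(P(m)),\qquad \sum_{n\le x}\Lambda(P(n))\,\mfS_P(x),\qquad x^2\mfS_P(x)^2 .
\]
Fixing $n$ (or a pair $n\ne m$), the map $P\mapsto P(n)$ (resp. $P\mapsto (P(n),P(m))$) pushes the uniform measure on the box $[-H,H]^{d+1}$ forward to an almost uniform measure on integers of size about $Hx^d$ (resp. on pairs constrained by $P(n)\equiv P(m)\bmod (n-m)$). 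Since $x\le(\log H)^\delta$, every modulus involved, namely all $\ell<x$ and all differences $n-m$, is tiny compared to $H$. Hence the distribution of the relevant values of $P$ in residue classes is exactly uniform up to boundary errors of relative size $O(x^{O(1)}/H)$. This lets us use the prime number theorem in arithmetic progressions with small moduli, or more simply the Siegel--Walfisz theorem, which is more than enough since the moduli are at most polylogarithmic in $H$.

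\textbf{Diagonal.} For $n=m$ we have $\Lambda(P(n))^2$. Averaging over $P$, the value $P(n)$ ranges essentially uniformly over an interval of length $\asymp H$ (with density depending on $n$, but the average of $\Lambda^2$ over a long interval of size $\asymp Hn^d$ is $\log(Hn^d)+O(1)$ by PNT). Since $\log(Hn^d)=\log H+O(\log x)$, the diagonal contributes $x\log H+O(x\log x)$, and $x\log x$ is absorbed in the stated error. This is the main term.

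\textbf{Off-diagonal and cross terms.} For $n\ne m$, conditioning on residues of $P$ modulo primes $\ell<x$ is the natural route. The expectation of $\Lambda(P(n))\Lambda(P(m))$ should factor as a product of local densities. At primes $\ell\nmid (n-m)$ the pair $(P(n),P(m))$ is uniform mod $\ell$. At primes dividing $n-m$ there is a correlation, but averaging also over $P$ restores the density $\mathbb E_P[\text{local factor of }\mfS_P]$ structure. One then compares with $\mathbb E[\mfS_P(x)\Lambda(P(n))]$ and $\mathbb E[\mfS_P(x)^2]$, expanding $\mfS_P(x)$ as a product over $\ell<x$ of functions of $P\bmod \ell$. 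The point is that $\mfS_P(x)$ is exactly the conditional expectation of $\Lambda(P(n))$ given $P\bmod \prod_{\ell<x}\ell$, up to the contribution of primes $\ge x$. So the three off-diagonal sums cancel up to errors.

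\textbf{Main obstacle.} This is where I expect the real difficulty: the product $\prod_{\ell<x}\ell=e^{(1+o(1))x}$ is not small compared to $H$ unless $x\ll\log H$, and $x$ may be as large as $(\log H)^\delta$. So one cannot condition on the full modulus. Instead I would truncate with a sieve parameter: replace $\Lambda$ by a smoothed Selberg/Goldston--Yıldırım type approximant $\Lambda_R(k)=\sum_{e\mid k,\,e\le R}\mu(e)\log(R/e)$ with $R\approx H^{1/2-\epsilon}$. For $\Lambda_R$ the pair correlations over the box are computable exactly (moduli up to $R^2<H$). The remaining difference $\Lambda-\Lambda_R$ is controlled in mean square by the diagonal, with only $\log H$ versus $\log R$ losses. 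The fluctuation of $\mfS_P(x)$ against the singular series from primes in $[x,R]$ yields errors of size $\sum_{n\ne m}$ of contributions from primes $\ell\ge x$ with $\ell\mid n-m$ or $\ell\mid \mathrm{disc}$. Balancing $R$ and using Cauchy--Schwarz between the diagonal $x\log H$ and the off-diagonal $\ll x^2\log x\cdot(\ldots)$ should produce exactly the cross errors $x\sqrt{\log H\log x}$ and $\log H\sqrt{x\log x}$. Getting the off-diagonal error down to that geometric-mean shape, rather than $x^2/\log x$ trivially, is the crux.
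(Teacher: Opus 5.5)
There is a genuine gap, and it sits at the obstacle you name. The averages that cannot be computed directly are $\mathbb E_P[\mfS_P(x)^2]$ and $\mathbb E_P[\mfS_P(x)\Lambda(P(n))]$. The reason is that $\mfS_P(x)$ depends on $P$ modulo $\mcal P(x)=\prod_{\ell<x}\ell\approx e^{x}$, which is far larger than $H$ once $x$ exceeds a constant multiple of $\log H$.

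Replacing $\Lambda$ by $\Lambda_R$ truncates the wrong factor: $\mathbb E_P[\Lambda_R(P(n))\mfS_P(x)]$ still involves the same huge modulus. Moreover, $\Lambda-\Lambda_R$ is not an error term. Its diagonal mean square is about $\log H-\log R\asymp\log H$ per point, the same order as the main term $x\log H$.

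What is missing is a way to replace $\mfS_P(z)$ itself by divisor sums over small moduli. The paper does this with Hooley's neutralisers (Lemma~\ref{lemma_hooley}) combined with Brun's weights $\lambda_k^\pm$ supported on $k<y=H^{1/2}(\log H)^{-A}$. This gives $\sum_{k\mid\mcal P(z)}\lambda_k^-\omega_P(k)/k\le\prod_{\ell<z}(1-\omega_P(\ell)/\ell)\le\sum_{k\mid\mcal P(z)}\lambda_k^+\omega_P(k)/k$, and the analogous sandwich for the square with $\hat f(k)=\prod_{\ell\mid k}\bigl(2\omega_P(\ell)/\ell-\omega_P(\ell)^2/\ell^2\bigr)$. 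After this step:
\begin{itemize}
\item $\sum_P\mfS_P(z)^2$ reduces to exact counting of $P$ in residue classes modulo $k<y$;
\item $\sum_P\psiabs_P(x)\mfS_P(z)$ reduces to primes in progressions to moduli up to $H^{1/2}(\log H)^{-A}$, far outside the Siegel--Walfisz range, so Bombieri--Vinogradov is used;
\item the fundamental lemma evaluates $\sum_k\lambda_k^\pm h(k)$ to within a factor $1+O(H^{-1/(2\log z)})$.
\end{itemize}
This is what makes averaging over $\pol{d}{H}$ possible with truncation point $z=x$ for $x$ as large as $(\log H)^\delta$. Your $\Lambda_R$ device provides no substitute for it.

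You also misattribute the error terms. They do not come from balancing a sieve level. They come from the condition $P(n)>0$ in $\psi_P$, which your sketch never handles. Your diagonal averages $\Lambda^2$ as if every value of $P(n)$ were counted, whereas $P(n)<0$ with probability $\asymp 1/n$, since it forces $c_d\ll H/n$.

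The paper first proves the result for $\psiabs_P$ with the much smaller error $O(x\log\log H)$. It then shows $H^{-d-1}\sum_P\bigl|\sum_{m\le x,\,P(m)<0}\Lambda(-P(m))\bigr|^2\asymp x\log x+(\log H)(\log x)$, and applies Cauchy--Schwarz against the mean square $x\log H$. The two stated error terms are exactly $\sqrt{x\log H}\cdot\sqrt{x\log x}$ and $\sqrt{x\log H}\cdot\sqrt{(\log H)(\log x)}$.

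Your treatment of $\mathbb E_P[\Lambda(P(n))\Lambda(P(m))]$ via $P(n)\equiv P(m)\bmod (n-m)$ with small moduli is sound and parallels Proposition~\ref{prop_ss2}. So the step you call the crux is not where the difficulty lies, while the step that is genuinely hard is left without a working tool.
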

\begin{remark}
For general polynomials $P$ there is scarce
  evidence for the true order of magnitude of $\ord{\psi_P(x)-\mfS_Px}$.
  When $\deg P=1$, there is extra structure coming from the zeroes of Dirichlet \mbox{$L$-functions} and there are conjectural error terms arising from  
  the Generalised Riemann Hypothesis and Montgomery's Conjecture \cite[Conjecture 1(b)]{FG1}. For polynomials of the form $P(t)=t^d+at+b$, Friedlander--Granville \cite{FG} proved that for some $x>(\log \ord P)^N$, the inequality $$\ord{\psi_P(x)-x\mfS_P}>\delta_N\mfS_Px$$  holds for some constant $\delta_N>0$ depending only on $N$. On the contrary, Theorem~\ref{teo_main} gives evidence for the error term in the Bateman--Horn conjecture for \textit{typical} polynomials.
\end{remark}

Our work makes crucial use of Hooley's neutralisers to average $\mfS_P$. In doing so, we improve on the work of Skorobogatov--Sofos~\cite[\S 4]{SS}, who used the circle method to prove the bound $O\left(\frac {x^2}{\log x}\right)$ for the left-hand side in Theorem \ref{teo_main}, but with $\mfS_P(\log x)$ in place of $\mfS_P(x)$.
Note that they work with $$\theta_P(x)\coloneqq \sum_{\substack{1\le n\le x\\P(n)\text { prime}}}\log P(n)$$ in place of $\psi_P$, but it is straightforward to pass from $\theta_P(x)$ to $\psi_P(x)$ without changing their upper bound.
    Theorem~\ref{teo_main} improves on the bound of Skorobogatov--Sofos; in particular, it also recovers the parts of~\cite{SS} related to Schinzel's hypothesis.

Recently, Kravitz--Woo--Xu~\cite[Theorem~1.1]{Wo} applied a Green--Tao-type result of Leng~\cite{Le} to prove 
\begin{equation}\label{eq-woo}\frac{1}{H^{d+1}}\sum_{P\in \pol d H}\ord{\psi_P(x)-x\mfS_P'}^k\ll \frac {x^k}{(\log x)^{1+o(1)}}\,,\end{equation}
where $\mfS_P'$ is a suitably truncated version of $\mfS_P$ and with $k\in \N$ and $x=(\log H)^\delta$ for a fixed $\delta>1$.
\begin{remark}
    Theorem~\ref{teo_main} gives an asymptotic for the second moment, whereas (\ref{eq-woo}) gives an upper bound. By Cauchy's inequality and Theorem~\ref{teo_main}, we have 
    $$\frac{1}{H^{d+1}}\sum_{P\in \pol d H}\ord{\psi_P(x)-x\mfS_P(x)}\ll \sqrt{x\log H}\,. $$
    For $k=1$, this is an improvement on (\ref{eq-woo}) when $\delta>2$. Furthermore, the trivial bounds $\psi_P(x)\ll x\log H$ and $\mfS_P(x)\ll \log H$ together with Theorem~\ref{teo_main} that imply for $k\ge 3$
    $$\frac{1}{H^{d+1}}\sum_{P\in \pol d H}\ord{\psi_P(x)-x\mfS_P(x)}^k\ll {(x\log H)^{k-1}}\,,$$
    which improves upon~(\ref{eq-woo}) for all $k\ge 3$ and $\delta>k$.
\end{remark}
It would be interesting to extend Theorem \ref{teo_main} to the range $x\le H^{\alpha}$ (as done in \cite{BST}), but one would need a substitute for Hooley's neutralisers.


\begin{notation}
    The letter $\ell$ is always used to indicate a prime number. Given $P\in \Z[t]$ and $m\in \N$, we define $\omega_P(m)\coloneqq\#\{x\in \Z/m\Z: P(x)=0\}$. Given $m\in \N$, we define $\omega(m)$ to be the number of distinct prime divisors of $m$.
\end{notation}


\section{Strategy of the proof}

A crucial feature of previous work~\cite{Wo,SS} is the truncation of $\mfS_P(z)$ at a small value $z=z(x)$. Skorobogatov--Sofos~\cite{SS} chose $z=\log x$, which inherently leads to an error term of size $O\left(\frac{x^2}{\log x}\right)$. To obtain asymptotics for the second moment within their method, it seems necessary to truncate $\mfS_P$ at $z=x^\gamma$ with $\gamma\ge 1$. Building on the method of Skorobogatov--Sofos, we attain such a larger truncation by employing Hooley's neutralisers~\cite{Ho} combined with Brun's sieve~\cite[Lemma 6.4]{IK}.

It would be interesting to combine \cite[Theorem 3.2]{Wo} with Hooley's neutralisers to obtain asymptotics for all moments, but it seems that one would need to take $M\le H^A$ instead of $M\le (\log H)^A$.

We define
\begin{equation}\label{def_psiabs}
\psiabs_P(x)\coloneqq \sum_{\substack{1<m\le x\\P(m)\neq 0}}\Lambda(\ord{P(m)}).
\end{equation}

\begin{theorem}
    \label{teo_mainabs} Fix $d\ge 2$, $\delta>0$, and $\gamma\ge 1$. Then for all $x,H>2$ with $x\le (\log H)^\delta$ we have
    \begin{equation*}
        \frac 1 {2^dH^{d+1}}\sum_{P\in \pol d H}\ord{\psiabs_P(x)-x\mfS_P(x^\gamma)}^2=x\log H+O(x\log\log H)\,.
    \end{equation*}
    where the implied constant depends only on $d$, $\delta$, and $\gamma$.
\end{theorem}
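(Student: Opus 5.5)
Expanding the square, we split
\[
\frac 1 {2^dH^{d+1}}\sum_{P}\ord{\psiabs_P(x)-x\mfS_P(x^\gamma)}^2=\Sigma_1-2\Sigma_2+\Sigma_3,
\]
where $\Sigma_1$ is the average of $\psiabs_P(x)^2$, $\Sigma_2$ that of $x\,\psiabs_P(x)\mfS_P(x^\gamma)$, and $\Sigma_3$ that of $x^2\mfS_P(x^\gamma)^2$. The main term $x\log H$ should come from the diagonal $m=n$ in $\Sigma_1$. The rest should cancel in the combination $\Sigma_1-2\Sigma_2+\Sigma_3$, up to $O(x\log\log H)$.

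\textbf{Diagonal term.} For fixed $m\le x$, the value $P(m)$ is, as $P$ ranges over $\pol dH$, essentially uniformly distributed in an interval of length $\asymp H x^d$, with mild multiplicities. Hence the average of $\Lambda(\ord{P(m)})^2$ is $\sim\log(Hx^d)=\log H+O(\log x)$ by the prime number theorem in a short interval, or just in an interval counted with multiplicity. Summing over $m\le x$ gives $x\log H+O(x\log\log H)$, since $\log x\ll\log\log H$.

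\textbf{Off-diagonal terms.} For $m\ne n$, the map $P\mapsto(P(m),P(n))$ with $d\ge 2$ is a lattice map. The pair $(P(m),P(n))$ is equidistributed in a box of side $\asymp H x^d$, subject to the congruence conditions imposed by $(m-n)$: since $P(m)\equiv P(n)\bmod (m-n)$, this affects primes $\ell\mid m-n$. We would evaluate $\sum_{m\ne n}\mathbb E[\Lambda(\ord{P(m)})\Lambda(\ord{P(n)})]$. Because $H$ is exponentially large compared to $x$, we could average over the coefficients $c_0,c_1$ (a two-dimensional affine lattice). This reduces to counting pairs of primes $(p,q)$ in a box with $p\equiv a$, $q\equiv b$ modulo moduli dividing $\prod(m-n)\cdot$(small). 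Such moduli are at most $x^{O(1)}\le(\log H)^{O(1)}$, which is within Siegel--Walfisz range. The result would be $\sum_{m\ne n}\mathfrak S_{m,n}$, a two-point singular series with error $O(x^2\exp(-c\sqrt{\log H}))$.

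Similarly, $\Sigma_2$ and $\Sigma_3$ are computed by expanding $\mfS_P(x^\gamma)=\prod_{\ell<x^\gamma}(\dots)$ and averaging the local root counts $\omega_P(\ell)$. This is where Hooley's neutralisers come in: the product over all $\ell<x^\gamma$ is an Euler product over too many primes to expand naively. So we would replace $\mfS_P(x^\gamma)$ by a Dirichlet-series approximation $\sum_{q\le Q}\mu(q)\ldots$ with neutralising weights and $Q=x^{O(1)}$. The tail would be controlled by Brun's sieve (\cite[Lemma 6.4]{IK}), giving an error in mean square of $O(x^{-A})$ relative to the main term. After this, all three sums become averages of products of local densities at primes $\ell<x^\gamma$, together with the global pair correlation above.

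\textbf{Expected obstacle.} The key identity to verify is that, after averaging over $P$, the two-point series from $\Sigma_1$, the mixed term $\Sigma_2$ and $\Sigma_3$ all equal the same quantity $x^2\,\mathbb E_P[\mfS_P(x^\gamma)^2]$ plus $O(x\log\log H)$. This is a local computation: for each $\ell$, the joint distribution of $(P(m)\bmod \ell,P(n)\bmod\ell)$ is uniform unless $\ell\mid m-n$. Primes $\ell\ge x^\gamma\ge x$ never divide $m-n$, so truncation at $x^\gamma$ with $\gamma\ge1$ loses nothing for the pair terms; this is exactly why $\gamma\ge 1$ is needed. I expect the hardest part to be controlling the neutraliser/Brun-sieve approximation uniformly enough that its mean-square error is $O(x\log\log H)$ rather than $O(x^2/\log x)$. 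I would first try a Hooley neutraliser supported on $q$ with all prime factors $<x^\gamma$ and $q\le x^{C}$, bounding the tail by Rankin's trick combined with the fundamental lemma of the sieve.
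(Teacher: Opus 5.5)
Your decomposition $\Sigma_1-2\Sigma_2+\Sigma_3$ is sound, and so are your diagonal and off-diagonal computations. For the off-diagonal term you fix $c_2,\dots,c_d$ and count $\Lambda(\ord{u})\Lambda(\ord{v})$ with $u\equiv v\bmod (m-n)$ over the image of $(c_0,c_1)$, using Siegel--Walfisz. This is a legitimate and more elementary substitute for the circle-method input of Proposition~\ref{prop_ss2}, and it gives the same $(2H)^2\frac{t}{\phi(t)}$ with $t=\ord{m-n}$.

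The gap is in $\Sigma_2$ and $\Sigma_3$: a neutraliser of level $Q=x^{C}$ cannot give the $O(x^{-A})$ relative error you claim. Take sieve weights supported on $k<Q$ and sifting range $z=x^\gamma$. The main terms are then $\sum_{k\mid\mcal P(z)}\lambda_k^{\pm}h(k)=\prod_{\ell<z}(1-h(\ell))\,(1+O(e^{-s}))$ with $s=\log Q/\log z=C/\gamma$, a fixed constant. This loss is not an artefact of the estimate. At bounded $s$, upper and lower sieve weights only determine the sifted main term up to a constant factor; already for the linear sieve, $F(s)>1>f(s)$ for every finite $s$. Rankin's trick does not rescue this either: when $\log Q\asymp\log z$, the bound $Q^{-\epsilon}\prod_{\ell<z}(1+O(\ell^{\epsilon-1}))$ is not small for any choice of $\epsilon$. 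So you know $\Sigma_2$ and $\Sigma_3$ only up to errors of order $x^2$. These swamp the target $O(x\log\log H)$, and when $x/\log H\to\infty$ they swamp even the main term $x\log H$. To get $x^2e^{-s}\ll x\log\log H$ you need $s$ of size about $\log x$, i.e.\ $\log Q$ of size about $(\log x)^2$ (up to a $\log\log x$ factor with the sharper $s^{-s}$ form of the fundamental lemma). The level must therefore be super-polynomial in $x$.

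Raising the level has a knock-on effect your plan does not address. When $x$ is a fixed power of $\log H$, such a $Q$ exceeds every fixed power of $\log H$. The mixed term $\Sigma_2$ requires $\Lambda(\ord{P(m)})$ to be equidistributed in the residue classes cut out by $P(\eta)\equiv 0\bmod k$ for all $k<Q$, and Siegel--Walfisz is no longer available at that range.

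The paper exploits the fact that $H$ is exponentially large in $x$. It takes the level $y=H^{1/2}/(\log H)^A$, so that the sieve error $e^{-\log y/\log z}\approx H^{-1/(2\log z)}$ is smaller than any power of $\log H$. Then $\Sigma_3$ only needs the count of $P\in\pol{d}{H}$ in residue classes modulo $k<y$, and $\Sigma_2$ is handled by Bombieri--Vinogradov (Lemma~\ref{lemma_BV}) summed over $k<y$.

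Since you also average over $c_1$, there is another option: fix $u=P(m)$. For $\gcd(m-\eta,k)=1$, the condition $P(\eta)\equiv0\bmod k$ then fixes $c_1$ modulo $k$, and summing over $c_1$ first leaves only the prime number theorem for $u$. Either way, what is missing from your plan is a much larger neutraliser level together with an input for moduli far beyond $(\log H)^{O(1)}$.
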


Working with $\psiabs_P(x)$ instead of $\psi_P(x)$ temporarily bypasses the somewhat inconvenient condition $P(m)>0$ in $\psi_P$.
In \S \ref{sec_trans}, we explain how Theorem \ref{teo_main} follows from Theorem \ref{teo_mainabs}.

To prove Theorem \ref{teo_mainabs}, we write the desired sum as
\begin{equation}\label{sq_opsq}
    \sum\limits_{P\in\pol{d}{H}}\psiabs_P(x)^2-2x\sum\limits_{{P\in\pol{d}{H}}}\psiabs_P(x)\mathfrak S_P(z)+x^2\sum\limits_{P\in\pol{d}{H}}\mathfrak S_P(z)^2
\end{equation}
and prove the following propositions.
\begin{prop}\label{prop_est_psi_sq} Keep the setting of Theorem~\ref{teo_mainabs} and let $\mcal L>1$. Then
    \begin{equation*}
        \frac 1{2^dH^{d+1}}\sum_{P\in \pol d H}\psiabs_P(x)^2=x^2\prod\limits_{\substack{\ell \text{ prime}\\\ell<\mcal L}}\left(1+\frac 1{\ell(\ell-1)}\right)+x(\log H)+O\left( x(\log x)+\frac
    {x^2}{\mcal L}\right)\,,
    \end{equation*}
    where the implied constant depends only on $d$ and $\delta$.
\end{prop}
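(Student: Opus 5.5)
Expanding the square, the left-hand side equals
\[
\sum_{\substack{1<m_1,m_2\le x}}\frac1{2^dH^{d+1}}\sum_{\substack{P\in\pol dH\\P(m_1)P(m_2)\neq0}}\Lambda(\ord{P(m_1)})\Lambda(\ord{P(m_2)}).
\]
We split this into the diagonal $m_1=m_2$ and the off-diagonal $m_1\neq m_2$. The plan is that the diagonal produces the $x\log H$ term and the off-diagonal produces the singular-series main term $x^2\prod_{\ell<\mcal L}(1+\frac1{\ell(\ell-1)})$.

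\emph{Diagonal.} Fix $m\le x$ and fix $c_d,\dots,c_1$. Then $c_0\mapsto P(m)$ is a translate of $c_0$, so $\ord{P(m)}$ runs over an interval of length $2H+1$ contained in $[-H',H']$ with $H'\ll H x^d$. Since $\log H'=\log H+O(\log x)$ in our range, the prime number theorem in short intervals (of length $\asymp H$ around values $\ll H x^d$, which is trivial here as $x\le(\log H)^\delta$) gives $\sum_{c_0}\Lambda(\ord{P(m)})^2=(2H+1)(\log H+O(\log x))$. Prime powers contribute negligibly. Summing over $m$ gives $x\log H+O(x\log x)$ after normalisation.

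\emph{Off-diagonal.} Fix $m_1\neq m_2$ and the coefficients $c_d,\dots,c_2$. The map $(c_1,c_0)\mapsto(P(m_1),P(m_2))$ is affine with matrix $\begin{pmatrix}m_1&1\\m_2&1\end{pmatrix}$ of determinant $m_1-m_2$. Its image is therefore the set of pairs $(a,b)$ in a parallelogram of area $\asymp H^2\ord{m_1-m_2}$ satisfying the congruence $a\equiv b+\text{const}\pmod{m_1-m_2}$, where the constant is determined by $c_d,\dots,c_2$. So we need
\[
\sum_{\substack{(a,b)\in\text{region}\\ a\equiv b+r \,(\mathrm{mod}\, q)}}\Lambda(\ord a)\Lambda(\ord b)
\]
with $q=\ord{m_1-m_2}\le x$. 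This is not a binary problem: $a$ and $b$ range independently over intervals of length $\asymp H$, subject only to a single congruence modulo $q\le(\log H)^\delta$. By Siegel--Walfisz (since $q\le(\log H)^\delta$), each of $a$ and $b$ is equidistributed in reduced residue classes mod $q$. Summing over $a$ in each class and over $b$ in the matching class gives the main term
\[
\text{area}\cdot\frac{1}{q}\cdot\frac{q}{\varphi(q)^2}\cdot\#\{\text{admissible pairs of classes}\}.
\]
Averaging over $r$, that is over $c_d,\dots,c_2$, the count of classes with $\gcd(a,q)=\gcd(b,q)=1$ and $a-b\equiv r$ has average $\varphi(q)^2/q$. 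The net local factor at $\ell\mid q$ is $1$ on average, but its weighting over $r$ must be done correctly, and this is the delicate point. Alternatively, we can average $c_2$ first: it shifts $r$ by $m_1^2-m_2^2$, which is divisible by $q$, so it is useless. We therefore use $c_d$ with $m_1^d-m_2^d$, also divisible by $q$. Hence $r$ is fixed once $m_1,m_2$ are fixed, namely $r\equiv0$: $P(m_1)\equiv P(m_2)\pmod{q}$.

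So the congruence is $a\equiv b\pmod q$, and the local factor is $\prod_{\ell\mid q}\frac{\ell}{\ell-1}$. The off-diagonal main term becomes
\[
\frac{1}{x^0}\sum_{m_1\ne m_2\le x}\frac{q}{\varphi(q)}\quad(\text{with } q=\ord{m_1-m_2}),
\]
which after normalisation is $\sum_{1\le k<x}2(x-k)\frac{k}{\varphi(k)}$. Writing $\frac{k}{\varphi(k)}=\sum_{n\mid k}\frac{\mu^2(n)}{\varphi(n)}$, this equals $x^2\sum_{n}\frac{\mu^2(n)}{n\varphi(n)}+O(x\log x)=x^2\prod_\ell\bigl(1+\frac1{\ell(\ell-1)}\bigr)+O(x\log x)$. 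Truncating the product at $\ell<\mcal L$ costs $O(x^2/\mcal L)$.

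\emph{Main obstacle.} The hard step is justifying the equidistribution with error uniform in $q\le x$ and in $H'$, and controlling the boundary of the parallelogram region. Siegel--Walfisz gives savings $\exp(-c\sqrt{\log H})$, which beats any power of $x\le(\log H)^\delta$, so the total error is $O(x^2(\log H)^{-A})$, which is acceptable. I would also check that the $P(m)=0$ exclusions and prime-power terms are negligible.
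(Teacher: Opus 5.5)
Your outline is correct. The diagonal part is essentially the paper's argument (Lemmas~\ref{lemma_d1}--\ref{lemma_s1}), but the off-diagonal part takes a genuinely different route.

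The paper never analyses the pair $(P(m_1),P(m_2))$ directly. It quotes Proposition~\ref{prop_ss2}, a two-kernel variant of the Skorobogatov--Sofos circle-method count, which gives $(2H)^2\frac{q}{\phi(q)}+O_A(H^2(\log H)^{-A})$ uniformly in $c_2,\dots,c_d$, and then sums this with Lemma~\ref{prop_sv_mix}.

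You instead observe that $(c_1,c_0)\mapsto(P(m_1),P(m_2))$ is a bijection onto $\{(a,b):a\equiv b\bmod q\}$ intersected with a parallelogram, since $q\mid P(m_1)-P(m_2)$. You then count with Siegel--Walfisz, which is available because $q\le x\le(\log H)^\delta$. This is more elementary and self-contained. The circle method of \cite{SS} is designed for many simultaneous values and is more than two points require.

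To make your version rigorous, drop the claim that $a,b$ range independently over intervals: the region has sides along $(1,1)$ and $(m_1,m_2)$. Instead, fibre along lines on which one coordinate is fixed. Write $N_i=\sum_{j\ge 2}c_jm_i^j$ and fix $a=P(m_1)$.
\begin{itemize}
\item The values $b=a+c_1(m_2-m_1)+(N_2-N_1)$ then run over the progression $b\equiv a\bmod q$, with $\asymp H/m_1$ terms. Values of $a$ near the ends of their range contribute negligibly.
\item Siegel--Walfisz evaluates the $b$-sum.
\item The prime number theorem with partial summation in $a$ then gives $\frac{q}{\phi(q)}\cdot 4H^2$.
\end{itemize}
Fibering with $c_0$ or $c_1$ fixed would instead produce a binary, twin-prime-type sum, which is exactly what must be avoided. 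Your detour about averaging $r$ over $c_2,\dots,c_d$ with a ``net local factor $1$'' is a false start: the factor $q/\phi(q)$ comes from $r\equiv 0$ alone.

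In the diagonal, the estimate $\log\ord{P(m)}=\log H+O(\log x)$ fails for those $c_0$ with $\ord{P(m)}$ small when $\ord{N}\le H$. Use partial summation instead, which gives $2H\log M+O(H)$ with $H\le M\ll Hx^d$. Done this way, your route delivers the stated error $O(x\log x)$. The paper's Lemma~\ref{lemma_d2} loses $O(\log\log H)$ per term, so Propositions~\ref{prop_est_diag} and~\ref{prop_nondiag} as written only give $O(x\log\log H)$, which is all that Theorem~\ref{teo_mainabs} needs.
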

\begin{prop}\label{prop_est_Ssq} Keep the setting of Theorem~\ref{teo_mainabs} and fix $A>0$. Then for $2\le z\le \frac{H^{1/2}}{(\log H)^A}$ we have
    $$\frac 1 {2^dH^{d+1}}\sum_{P\in \pol d H}\mathfrak S_P^2(z)=\prod_{\substack{\ell<z\\\ell\text{ prime}}}\left(1+\frac{1}{\ell(\ell-1)}\right)+O\left((\log z)^2H^{-\frac{1}{2\log z}}+\frac{(\log z)^2}{(\log H)^{2A-1}}\right)\,,$$
    where the implied constant depends only on $d, \delta$ and $A$.
\end{prop}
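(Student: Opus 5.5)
Write $\mfS_P(z)=\prod_{\ell<z}f_\ell(P)$, where
\[
f_\ell(P)\coloneqq\Bigl(1-\tfrac1\ell\Bigr)^{-1}\Bigl(1-\tfrac{\omega_P(\ell)}{\ell}\Bigr).
\]
The plan is to compare $\mfS_P(z)^2$ with a product of expectations. For a fixed prime $\ell$, the distribution of $P\bmod\ell$ for $P$ uniform over $\pol dH$ is equidistributed in $(\Z/\ell\Z)^{d+1}$ up to an error $O(\ell/H)$. More generally, for a squarefree modulus $q$, equidistribution holds modulo $q$ with error $O(q/H)$, up to a harmless adjustment for the condition $c_d>0$.

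For a uniformly random polynomial modulo $\ell$, the number of roots $\omega_P(\ell)$ has mean $1$. I also expect variance $1-1/\ell$ (up to boundary terms in degree), which would give $\mathbb E[(1-\omega/\ell)^2]=(1-1/\ell)^2+(1-1/\ell)/\ell^2$. I will compute $\mathbb E[\omega_P(\ell)^2]$ exactly: it is a sum over pairs of points of the probability of vanishing at both, which is $1/\ell$ on the diagonal and $1/\ell^2$ off it, since $d\ge1$. Dividing by $(1-1/\ell)^2$ gives exactly $1+\frac1{\ell(\ell-1)}$. This explains the main term once independence across primes is justified.

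The main obstacle is that the product over all $\ell<z$ involves a modulus $\prod_{\ell<z}\ell\approx e^z$, far larger than $H$, so CRT equidistribution cannot be applied directly. Following the strategy of \S2, I will truncate via Hooley's neutralisers/Brun's sieve. Concretely, I expand
\[
f_\ell^2=1+g_\ell,
\]
where $g_\ell$ depends on $P\bmod\ell$, has mean $\frac1{\ell(\ell-1)}+O(\ell/H)$, and satisfies $|g_\ell|\ll\omega_P(\ell)^2/\ell+1/\ell$. Then $\mfS_P(z)^2=\sum_{q\mid\Pi(z)}\prod_{\ell\mid q}g_\ell(P)$.

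I split this sum at $q\le H^{1/2}$ and $q>H^{1/2}$:
\begin{itemize}
\item For $q\le H^{1/2}$, CRT equidistribution replaces the average of $\prod g_\ell$ by $\prod\mathbb E g_\ell$ with error $O(q/H)$ per term. Summing these errors over $q\le H^{1/2}$ gives $O(H^{-1/2})$-type losses. This is where the constraint $z\le H^{1/2}(\log H)^{-A}$ enters, producing the $(\log z)^2/(\log H)^{2A-1}$ term once combined with the tail bound.
\item For the tail $q>H^{1/2}$ I use Rankin's trick. I bound $\mathbf 1_{q>y}\le (q/y)^{\epsilon}$ with $\epsilon=1/(2\log z)$ and $y=H^{1/2}$. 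The averaged absolute values $\mathbb E|g_\ell|\ll1/\ell$ then give the bound $y^{-\epsilon}\prod_{\ell<z}(1+O(\ell^{\epsilon-1}))$. Since $\ell^\epsilon\le e^{1/2}$ for $\ell<z$, this product is $\ll(\log z)^{O(1)}$, which yields $(\log z)^2H^{-1/(2\log z)}$.
\end{itemize}

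One caveat is that the moments of $|\prod g_\ell|$ on the tail must also be averaged, which again needs equidistribution for large $q$. To handle this, I will apply Rankin's trick before averaging, and bound $\prod_{\ell\mid q}|g_\ell|$ using the multiplicative function $\omega_P$. Alternatively, I will dominate it by a Brun-sieve-type upper-bound sieve on the coefficients, which is the neutraliser step.

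Finally, completing the main term $\sum_{q\le y}\prod\mathbb E g_\ell$ back to the full product $\prod_{\ell<z}(1+\frac1{\ell(\ell-1)})$ costs another Rankin tail of the same size. I expect this tail control uniformly in $P$ to be the hardest part.
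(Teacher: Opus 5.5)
Your route (expand $\mfS_P(z)^2=\sum_{q\mid \mcal P(z)}\prod_{\ell\mid q}g_\ell(P)$, use equidistribution for $q\le y$ and Rankin's trick for $q>y$) differs from the paper's. Your local computation $\mathbb{E}[f_\ell^2]=1+\frac{1}{\ell(\ell-1)}$ is correct, but the argument has a genuine gap exactly at the step you call the hardest.

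Your tail bound $y^{-\epsilon}\prod_{\ell<z}(1+O(\ell^{\epsilon-1}))$ comes from replacing $\mathbb{E}_P\prod_{\ell\mid q}|g_\ell(P)|$ by $\prod_{\ell\mid q}\mathbb{E}|g_\ell|$ for $q>y$. That assumes independence across primes for moduli far larger than $H$, which is precisely what is unavailable. The two remedies you mention are only named, not carried out.

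The first remedy can be carried out, but it needs a fact you do not state: $\omega_P(\ell)\le d$ unless $\ell$ divides every coefficient of $P$. Hence $|g_\ell(P)|\le \max(1,d-1)\frac{2\ell-1}{(\ell-1)^2}$ for primes not dividing the content. Apply Rankin's trick to each $P$ with $\epsilon=1/\log z$; your $\epsilon=1/(2\log z)$ with $y=H^{1/2}$ only yields $H^{-1/(4\log z)}$. This bounds the tail by $\ll_d y^{-\epsilon}(\log z)^{2\max(1,d-1)}4^{\omega(\gcd(c_0,\dots,c_d))}$, and the last factor averages to $O(1)$. That recovers the stated error term for $d=2$. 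For $d\ge 3$ it gives only $(\log z)^{2(d-1)}$ in place of $(\log z)^2$, and doing better requires averaging the tail, which is the original problem again.

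Two smaller points. For $q\le y$, an error $O(q/H)$ per modulus sums to $O(1)$, not $O(H^{-1/2})$; you must weight it by $\prod_{\ell\mid q}\mathbb{E}|g_\ell|\ll C^{\omega(q)}/q$. Also, in your scheme neither the hypothesis $z\le H^{1/2}(\log H)^{-A}$ nor the term $(\log z)^2(\log H)^{1-2A}$ actually arises, contrary to what you say.

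The paper avoids any tail by a pointwise sandwich. It first pulls out $\prod_{\ell<z}\ell^2/(\ell-1)^2$, so that the local factor $f(\ell)=(1-\omega_P(\ell)/\ell)^2$ lies in $[0,1]$. Your $g_\ell$ has no fixed sign ($g_\ell>0$ if $\omega_P(\ell)=0$, $g_\ell<0$ if $\omega_P(\ell)\ge 2$), so no positivity argument applies to it. Lemma~\ref{lemma_hooley} with Brun's weights then gives, for every single $P$,
\[
\sum_{k\mid \mcal P(z)}\lambda_k^-\hat f(k)\le\prod_{\ell<z}\Bigl(1-\frac{\omega_P(\ell)}{\ell}\Bigr)^2\le\sum_{k\mid \mcal P(z)}\lambda_k^+\hat f(k),\qquad \hat f(k)=\prod_{\ell\mid k}\Bigl(\frac{2\omega_P(\ell)}{\ell}-\frac{\omega_P(\ell)^2}{\ell^2}\Bigr)\ge 0,
\]
where $\lambda_k^\pm=0$ for $k\ge y=H^{1/2}(\log H)^{-A}$.

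Only moduli $k<y$ occur, so equidistribution of $P \bmod k$ applies; its error, after normalising, is the $(\log z)^2(\log H)^{1-2A}$ term. The fundamental lemma with $h(\ell)=(2\ell^2-2\ell+1)/\ell^3$ and $\kappa=2$, which is where $z<y$ is needed, gives $\sum_k\lambda_k^\pm h(k)=\prod_{\ell<z}(1-h(\ell))\bigl(1+O(e^{-\log y/\log z})\bigr)$. Since $\frac{\ell^2}{(\ell-1)^2}(1-h(\ell))=1+\frac{1}{\ell(\ell-1)}$, this produces the main term with the $H^{-1/(2\log z)}$ error.
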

\begin{prop}\label{prop_est_psiS} Keep the setting of Theorem~\ref{teo_mainabs} and fix $A>5$. For $2\le z\le \frac{H^{1/2}}{(\log H)^{A}}$ we have
    $$\frac 1 {2^dH^{d+1}}\sum_{P\in \pol d H}\mathfrak S_P(z)\psiabs_P(x)=x\prod_{\substack{\ell< z\\\ell\text{ prime}}}\left(1+\frac 1 {\ell(\ell-1)}\right)+O\left(x\,{(\log z)}\,{H^{-\frac 1 {2\log z}}}+\frac{x\log z}{(\log H)^{A-d\delta-5}}\right)\,,$$
    where the implied constant depends only on $d,\delta$ and $A$.
\end{prop}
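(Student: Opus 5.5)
We need to prove Proposition \ref{prop_est_psiS}. Expand $\mfS_P(z)$ multiplicatively. For each prime $\ell<z$ write the local factor as $(1-\frac1\ell)^{-1}(1-\frac{\omega_P(\ell)}{\ell}) = 1+a_P(\ell)$ with $a_P(\ell)=\frac{1-\omega_P(\ell)}{\ell-1}$. Then
$$\mfS_P(z)=\sum_{\substack{q \text{ squarefree}\\ p\mid q\Rightarrow p<z}}\ \prod_{\ell\mid q}a_P(\ell).$$
This sum has too many terms to use directly, since $q$ can be as large as $e^{z}$. So I would insert Hooley's neutralisers together with Brun's sieve. That is, I would replace $\mfS_P(z)$ by a truncated version $\mfS_P^{\flat}(z)=\sum_{q\le Q}\lambda_q\prod_{\ell\mid q}a_P(\ell)$, with $Q=H^{1/2}$ or similar and sieve-type weights $\lambda_q$. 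The difference must be small on average over $P$. For this I would use the bound $|a_P(\ell)|\le d/(\ell-1)$ and the fact that, averaged over $P$, $a_P(\ell)$ has mean $O(\ell^{-2})$. The tail then contributes $O((\log z)H^{-1/(2\log z)})$ by a Rankin-type trick: weight $q$ by $(q/Q)^{1/\log z}$. Combined with the trivial bound $\psiabs_P(x)\ll x$ on average (or Cauchy--Schwarz against Proposition~\ref{prop_est_psi_sq}), this yields the first error term $x(\log z)H^{-1/(2\log z)}$.

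Next, for the main term, swap the sums:
$$\sum_P \psiabs_P(x)\prod_{\ell\mid q}a_P(\ell)=\sum_{1<m\le x}\sum_P \Lambda(|P(m)|)\prod_{\ell\mid q}a_P(\ell).$$
For fixed $m$ and $q$, the quantity $\prod_{\ell\mid q}a_P(\ell)$ depends only on $P \bmod q$. So I would split $P$ into residue classes $P\equiv R \bmod q$. Within a class, the map $P\mapsto P(m)$ sends the box $[-H,H]^{d+1}$ to integers. Fixing $c_1,\dots,c_d$, the value $P(m)=c_0+(\text{stuff})$ runs over an arithmetic progression modulo $q$ of length about $H/q$, namely the progression $\equiv R(m)\bmod q$. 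The prime number theorem in arithmetic progressions (Siegel--Walfisz, or Bombieri--Vinogradov over $q\le H^{1/2}/(\log H)^A$, which matches the hypothesis $z\le H^{1/2}/(\log H)^A$) gives
$$\sum_{c_0}\Lambda(|P(m)|)\approx \frac{2H}{\varphi(q)}\mathbb 1_{(R(m),q)=1},$$
with error $H/(\log H)^{A'}$ summed over $q$. Bombieri--Vinogradov needs the sum over $q$ to be handled; the range of $m\le x\le(\log H)^\delta$ and the size of the coefficients cost the factors $(\log H)^{d\delta+5}$. This produces the second error term $x\log z/(\log H)^{A-d\delta-5}$.

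Finally, compute the local main term. It equals $\sum_q\lambda_q\,\frac{q}{\varphi(q)}\,\mathbb E_{R\bmod q}\bigl[\mathbb 1_{(R(m),q)=1}\prod_{\ell\mid q}a_R(\ell)\bigr]$, which factors over $\ell\mid q$. For a single $\ell$, with $R$ uniform mod $\ell$, one computes $\frac{\ell}{\ell-1}\mathbb E[\mathbb 1_{R(m)\ne0}\,a_R(\ell)]$. Since $R(m)\neq0$ is an event of probability $1-1/\ell$, and conditioning on it lowers the expected number of roots by $1/\ell\cdot(1-1/\ell)^{-1}$-type corrections, this should evaluate to exactly $\frac{1}{\ell(\ell-1)}$. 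Reinstating the full range of $q$ (same neutraliser error) gives $\prod_{\ell<z}(1+\frac1{\ell(\ell-1)})$. Summing over $m\le x$ gives the factor $x$, since the $m$-dependence disappears after averaging. I expect the main obstacle to be the neutraliser step: choosing the weights $\lambda_q$ so that the truncated sum equals $\mfS_P(z)$ up to an error controlled uniformly enough to interchange with $\psiabs_P$. This is where I would first try Brun's sieve with the Rankin exponent $1/\log z$.
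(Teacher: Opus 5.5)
The overall shape of your outline is right: open $\psiabs_P$, swap sums, apply Bombieri--Vinogradov to progressions in $c_0$, and compute local densities. Your local computation is also correct: $\frac{\ell}{\ell-1}$ times the average over $R \bmod \ell$ of $\mathbb 1_{R(m)\neq 0}\,a_R(\ell)$ is exactly $\frac{1}{\ell(\ell-1)}$. The gap is the step you flag yourself as the main obstacle, and the mechanism you sketch for it does not work. Since $a_P(\ell)=\frac{1-\omega_P(\ell)}{\ell-1}$ takes both signs (positive when $\omega_P(\ell)=0$, negative when $\omega_P(\ell)\ge 2$), inserting Brun-type weights into $\sum_{q\mid \mcal P(z)}\prod_{\ell\mid q}a_P(\ell)$ gives neither an upper nor a lower bound for $\mfS_P(z)$. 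Within your expansion the only option is a sharp cutoff $q\le Q$ together with a genuine tail bound. Rankin's trick needs absolute values: $\sum_{q\mid\mcal P(z),\,q>Q}\prod_{\ell\mid q}\ord{a_P(\ell)}\le Q^{-1/\log z}\prod_{\ell<z}\bigl(1+e\ord{a_P(\ell)}\bigr)$. So the smallness of the signed mean of $a_P(\ell)$ (it is in fact $0$) is irrelevant, because the mean of $\ord{a_P(\ell)}$ is $\asymp 1/\ell$. Nor can you average this product over $\pol{d}{H}$: it depends on $P$ modulo $\mcal P(z)$, which is far larger than $H$, and the box is not equidistributed there. Pointwise you only get $(\log z)^{O(d)}Q^{-1/\log z}$, with extra care for $P$ whose content has a prime factor $\ell<z$, where $a_P(\ell)=-1$ and your bound $\ord{a_P(\ell)}\le d/(\ell-1)$ fails. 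This is weaker than the claimed $(\log z)H^{-1/(2\log z)}$. Such a repaired route can be pushed through when $\log z\ll\log\log H$, which is all Theorem~\ref{teo_mainabs} needs, but it does not prove the proposition as stated.

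The missing idea is the positivity on which Hooley's neutralisers rest. Write $\mfS_P(z)=\prod_{\ell<z}\frac{\ell}{\ell-1}\cdot f(\mcal P(z))$ with $f(k)=\prod_{\ell\mid k}\bigl(1-\omega_P(\ell)/\ell\bigr)\in[0,1]$, so that $\hat f(k)=\omega_P(k)/k\ge 0$. Lemma~\ref{lemma_hooley} with the Brun weights of Lemma~\ref{lemma_sieve}, supported on $k<y=H^{1/2}/(\log H)^A$, then gives for every single $P$ the bounds $\sum_{k\mid\mcal P(z)}\lambda_k^-\omega_P(k)/k\le f(\mcal P(z))\le\sum_{k\mid\mcal P(z)}\lambda_k^+\omega_P(k)/k$. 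Because $\psiabs_P(x)\ge 0$, you may multiply by it and sum over $P$, so no tail is ever estimated. Next write $\omega_P(k)=\sum_{\eta \bmod k}\mathbb 1_{k\mid P(\eta)}$ and discard the negligible $c_0$ with $\gcd(P(m),k)>1$. Each pair $(k,\eta)$ then confines $c_0$ to a single residue class modulo $k$ with weight $\lambda_k^{\pm}/k$, so Lemma~\ref{lemma_BV} applies with total weight at most $1$ per modulus; this is the source of the second error term. Count $(c_1,\dots,c_d,\eta)$ with $\gcd(P(m)-P(\eta),k)=1$ using $P(m)-P(\eta)=(m-\eta)\bigl(c_1+c_2(m+\eta)+\cdots\bigr)$, where $c_1$ has coefficient $1$. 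This yields the main term $2^dH^{d+1}x\sum_k\lambda_k^\pm\phi(k)/k^2$. The fundamental lemma with $h(\ell)=(\ell-1)/\ell^2$ evaluates it as $\prod_{\ell<z}\frac{\ell^2-\ell+1}{\ell^2}\bigl(1+O(e^{-\log y/\log z})\bigr)$, which is where $H^{-1/(2\log z)}$ comes from. Multiplying by $\prod_{\ell<z}\frac{\ell}{\ell-1}\ll\log z$ gives $\prod_{\ell<z}\bigl(1+\frac{1}{\ell(\ell-1)}\bigr)$, matching your local factor, and accounts for the factor $\log z$ in both error terms.
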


The proof of Proposition \ref{prop_est_psi_sq} is given in \S\ref{sec_psisq} and relies on Skorobogatov--Sofos' treatment of the analogous sum. By further opening up the squares, \begin{align*}
    \frac 1{2^dH^{d+1}}\sum_{P\in \pol d H}\psiabs_P(x)^2=&\frac 1{2^dH^{d+1}}\sum_{\substack{m\le x}}\sum_{\substack{P\in \pol d H\\P(m)\neq 0}}\Lambda(\ord{P(m)})^2\\
    &+\frac 1{2^dH^{d+1}}\sum_{\substack{m_1\neq m_2\le x}}\sum_{\substack{P\in \pol d H\\P(m_1),P(m_2)\neq0}}\Lambda(\ord{P(m_1)})\Lambda(\ord{P(m_2)})\,.
\end{align*}
We give asymptotics for the diagonal term $\sum\limits_{m\le x}\sum\limits_{P\in \pol d H}\Lambda(\ord{P(m)})^2$, whereas \cite{SS} provides only a bound.
The non-diagonal term $\sum\limits_{m_1\neq m_2\le x}\sum\limits_{P\in \pol d H}\Lambda(\ord{P(m_1)})\Lambda(\ord{P(m_2)})$ contributes the main term $x^2\prod\limits_{\ell=2}^\infty \left(1+\frac 1 {\ell(\ell-1)}\right)$.
This agrees with the main terms of $\sum_P\psiabs_P(x)\mfS_P$ and $\sum_P\mfS_P^2$ only after truncating the infinite product at $\ell<\mcal L$. This truncation yields the error term $O\left(\frac{x^2}{\mcal L}\right)$ in Proposition \ref{prop_est_psi_sq}, which is smaller than the secondary term $x\log H$ only when $\mcal L\ge \frac x {\log H}$. Thus, for the main terms of Propositions \ref{prop_est_psi_sq}-\ref{prop_est_psiS} to match, one must take a large truncation point $z$ in (\ref{def_tbh}). Consequently, the proofs of the statements analogous to Propositions \ref{prop_est_Ssq}-\ref{prop_est_psiS} in \cite{Wo,SS} no longer apply. It is at this point that we bring in Hooley's neutralisers. The proofs of Propositions \ref{prop_est_Ssq}-\ref{prop_est_psiS} are given, respectively, in \S\S \ref{sec_Ssq}-\ref{sec_psiS}.

\begin{proof}[Proof of Theorem \ref{teo_mainabs}]
    It suffices to substitute the asymptotics of Propositions \ref{prop_est_psi_sq}-\ref{prop_est_psiS} in (\ref{sq_opsq}) with $\mcal L=z=x^\gamma$ and with $A$ sufficiently large.
\end{proof}
\begin{remark}
    In previous work \cite{Wo,SS}, $x$ was restricted to $(\log H)^{1+\varepsilon}<x<(\log H)^{\delta}$ so that the diagonal term is asymptotically smaller than $x^2$. By obtaining a precise asymptotic for the diagonal term, we can remove the condition $x>(\log H)^{1+\varepsilon}$. The condition $x(H)\to \infty$ in Theorem \ref{teo_main} is necessary if we want to restrict ourselves to positive values of polynomials (see \S \ref{sec_trans}), but it can otherwise be dropped, as in Theorem
    \ref{teo_mainabs}.
\end{remark}


\section{The term {$\sum {(\psiabs_P)}^2$}}\label{sec_psisq}
The asymptotic for $\sum_P\psiabs_P(x)^2$ requires asymptotics for both the diagonal and the non-diagonal terms. These are proved, respectively, in  Proposition~\ref{prop_est_diag} and Proposition~\ref{prop_nondiag}.

\subsection{The diagonal term}\label{sec_diag}

Writing $P(t)=c_0+c_1t+\ldots +c_dt^d$, we have 
\begin{align}\label{eq_split}\sum_{P\in \pol d H}\sum_{\substack{m\le x\\P(m)\neq 0}}\Lambda(\ord{P(m)})^2
&=
\sum_{\substack{c_1,\ldots,c_d\in [-H,H]\\c_d>0}}\,\sum_{m\le x}\sum_{\substack{c_0\in [-H,H]\\c_0+c_1m+\ldots+c_dm^d\neq 0}} \Lambda\left(\ord{c_0+c_1m+\ldots+c_dm^d}\right)\,.
\end{align}
In what follows, we fix $c_1,\ldots,c_d\in [-H,H]$ and $m\le x$, and we set $$N\coloneqq c_1m+\ldots+c_dm^d\,\qquad\text{and}\qquad M\coloneqq \max\{\ord{-H+N},\ord{H+N}\}\,.$$ We then derive asymptotics for $\sum\limits_{\substack{c_0\in [-H,H]\\c_0+N\neq 0}} \Lambda(\ord{c_0+N})^2$. First, we decompose this sum as
\begin{align*}
\sum_{\substack{c_0\in [-H,H]\\c_0+N\neq 0}} \left(\Lambda^2-\Lambda\cdot \log \right)(\ord{c_0+N})\,+\,\sum_{\substack{c_0\in [-H,H]\\c_0+N\neq 0}} \left(\Lambda\cdot \log\right)(\ord{c_0+N})\end{align*}
and show that the first sum on the right-hand side is asymptotically small.
\begin{lemma}\label{lemma_d1} For any $H>1$ and $N\in \Z$  we have 
    $$\sum_{\substack{c_0\in [-H,H]\\c_0+N\neq 0}}\left( \Lambda(\ord{c_0+N})^2-\Lambda(\ord{c_0+N})\log (\ord{c_0+N})\right)=O\left( M^{1/2}\log M\right)\,,$$ where the implied constant depends only on $d$.
\end{lemma}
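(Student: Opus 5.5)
The plan is to observe that the summand vanishes on primes and is controlled on proper prime powers, and that proper prime powers are sparse. Write $n=\ord{c_0+N}$. If $n=\ell$ is prime, then $\Lambda(n)^2=(\log \ell)^2=\Lambda(n)\log n$, so the summand is zero. If $n$ is not a prime power, both terms vanish. Hence only $n=\ell^k$ with $k\ge 2$ contribute. For these,
\[
\Lambda(n)^2-\Lambda(n)\log n=(\log \ell)^2-k(\log\ell)^2=-(k-1)(\log \ell)^2 ,
\]
whose absolute value is at most $(\log \ell)\log n\le (\log n)^2/2$.

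Next we count the contributing terms. As $c_0$ ranges over $[-H,H]$, the integer $c_0+N$ ranges over an interval contained in $[-M,M]$. So each value of $n\in[1,M]$ is attained by at most two values of $c_0$, one for each sign. Therefore the absolute value of the sum is at most
\[
2\sum_{\substack{\ell^k\le M\\ k\ge 2}}(k-1)(\log \ell)^2 .
\]
We split this by $k$. For $k=2$, the terms are primes $\ell\le M^{1/2}$, each contributing $(\log\ell)^2\le \tfrac14(\log M)^2$ trivially. Using Chebyshev's bound $\sum_{\ell\le y}\log\ell\ll y$ instead gives the sharper total $\ll M^{1/2}\log M$. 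For $k\ge 3$, the contribution is at most
\[
\sum_{3\le k\le \log_2 M}(k-1)\,\log(M^{1/k})\,\theta(M^{1/k})\ll (\log M)^2 M^{1/3}\log M ,
\]
which is $o(M^{1/2}\log M)$. Combining the two ranges gives the bound $O(M^{1/2}\log M)$, with an absolute implied constant, so it certainly depends at most on $d$. If $M$ is small, say $M\le 2$, the sum is trivially bounded by a constant; note that $M\ge H>1$.

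There is no real obstacle here. The only points needing care are the following. First, the factor $\log \ell\le \tfrac12\log M$ in the $k=2$ range is needed to get $\log M$ rather than $(\log M)^2$: pairing one $\log\ell$ with Chebyshev's bound for $\theta(M^{1/2})$ leaves exactly one logarithm. Second, one must check that the interval $\{\ord{c_0+N}\}$ is indeed contained in $[0,M]$, which is immediate from the definition of $M$ as the larger endpoint in absolute value.
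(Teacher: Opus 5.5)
Your proposal is correct and is essentially the paper's argument: you note that primes contribute zero, use that each value of $|c_0+N|\le M$ is attained by at most two values of $c_0$, bound the prime-square range by $M^{1/2}\log M$ via Chebyshev, and bound the higher prime powers by $M^{1/3}(\log M)^{O(1)}$. Your $k\ge 3$ estimate carries one superfluous factor of $\log M$ (the true bound is $M^{1/3}(\log M)^2$), which is harmless.
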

\begin{proof}
     Since $\Lambda $ is supported on prime powers, we consider only those $c_0$ for which $\ord{c_0+N}=\ell^a$ with  $\ell\ll M^{1/a}$  a prime and $a\ll \log M$. Thus, the sum is 
\begin{align*}
    \ll\sum\limits_{2\le a\ll \log M}\sum\limits_{\substack{\ell\ll M^{1/a}\\ \ell\text{ prime}}}a(\log \ell)^2\sum\limits_{\substack{c_0\in [-H,H]}} \mathbb 1_{\ord{c_0+N}=\ell^a}
    &
    \ll \sum\limits_{2\le a\ll \log M}a\sum\limits_{\substack{\ell\ll M^{1/a}\\ \ell\text{ prime}}}  (\log \ell)^2\,,
    \end{align*}
where the inequality $\sum_{c_0}\mathbb 1_{\ord{c_0+N}=\ell^a}\ll 1$ is justified by the fact that there are at most two values of $c_0$ such that $\ord{c_0+N}=\ell^a$. Then we conclude that the sum above is
    \begin{align*}
    &\ll 2M^{1/2}\log (M^{1/2})+O\left(M^{1/3}(\log M)^2\right)=O\left(M^{1/2}\log M\right)\,.\qedhere
\end{align*}
\end{proof}

\begin{lemma}\label{lemma_d2}
     For any $H,A>1$ and $N\in \Z$ we have $$\sum_{\substack{c_0\in [-H,H]\\c_0+N\neq 0}}\left(\log\cdot \Lambda\right)(\ord{c_0+N})=
     \sum_{\substack{c_0\in [-H,H]\\c_0+N\neq 0}}\left(\log M+O(\log \log H)\right)\Lambda(\ord{c_0+N})\,+\, O\left(\frac{M(\log M)^2}{(\log H)^A}\right)\,$$ where the implied constant depends only on $d$.
\end{lemma}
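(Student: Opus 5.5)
The values $\ord{c_0+N}$ all lie in $[0,M]$, since $c_0+N$ ranges over $[N-H,N+H]$. The idea is that $\log\ord{c_0+N}$ equals $\log M$ up to $O(\log\log H)$, except when $\ord{c_0+N}$ is unusually small compared with $M$. I would split the sum according to whether $\ord{c_0+N}> M/(\log H)^{A}$ or not, writing $Y\coloneqq M/(\log H)^A$.

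\emph{Large values.} For $c_0$ with $Y<\ord{c_0+N}\le M$ we have
$$\log M-A\log\log H<\log\ord{c_0+N}\le \log M,$$
so $\log\ord{c_0+N}=\log M+O(\log\log H)$. Multiplying by $\Lambda(\ord{c_0+N})$ and summing gives exactly the main expression of the lemma, restricted to these $c_0$. The implied constant depends only on $A$; since $A$ is fixed in the application, this is harmless.

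\emph{Small values.} For $c_0$ with $0<\ord{c_0+N}\le Y$, I would not compare the two sides but bound both trivially. Each of $\log\ord{c_0+N}\,\Lambda(\ord{c_0+N})$ and $(\log M+O(\log\log H))\Lambda(\ord{c_0+N})$ is $\ll (\log M+\log\log H)\log M$. The number of integers $c_0$ with $\ord{c_0+N}\le Y$ is at most $2Y+1$, since $c_0+N$ must lie in $[-Y,Y]$. So the contribution of the small range to either side is
$$\ll (Y+1)(\log M)^2+(Y+1)(\log M)\log\log H .$$
Moving the small-range terms from the left-hand expression to the right then costs only an error of this size.

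\emph{Bounding the error.} We need $Y(\log M)^2=M(\log M)^2/(\log H)^A$, which is the claimed error. The extra pieces must also be absorbed. First, $M\ge H\ge$ something large, because $M=\max\{\ord{N-H},\ord{N+H}\}\ge H$. Second, $Y\log M\log\log H$ is dominated after replacing $A$ by $A+1$, or simply because $\log\log H\ll \log M$ when $M\ge H$. The "$+1$" term gives $(\log M)^2$, which is $\ll M(\log M)^2/(\log H)^A$ since $M\ge H$ grows faster than any power of $\log H$. So this small nuisance is absorbed as well, and this is the step to check carefully. Everything else is elementary, and there is no real obstacle beyond keeping track of the dependence of the constants, which should be stated as depending on $A$ (and $d$).
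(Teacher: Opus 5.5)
Your proof is correct and is essentially the paper's argument. You split the $c_0$-range at $|c_0+N| = M/(\log H)^A$, use $\log|c_0+N| = \log M + O(\log\log H)$ on the large range, and bound the small range trivially on both sides by its cardinality times $(\log M)^2$. You are also right to account for the $+1$ in the count of small values, and to note that the implied constant depends on $A$ and not only on $d$; neither point changes the argument.
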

\begin{proof}
    Define $$\mathscr A_{A,N}\coloneqq \left\{c_0\in [-H,H]: 0<\ord{c_0+N}\le \frac{M}{(\log H)^A} \right\}\qquad\text{and}\qquad \mathscr B_{A,N}\coloneqq [-H,H]\setminus \mathscr A_{A,N}\,.$$
Observe that $\#\mathscr A_{A,N}=O\left( \frac{M}{(\log H)^A}\right)$ and that in $\mathscr B_{A,N}$ we have $\frac{M}{(\log H)^A}\le \ord{c_0+N}\le M$, which implies $\log (\ord{c_0+N})=\log M+O(\log \log H)$.
By separating the sum over $c_0\in [-H,H]$ into two sums over $\mathscr A_{A,N}$ and $\mathscr B_{A,N}$, we obtain
\begin{align*}
    \sum_{\substack{c_0\in [-H,H]\\c_0+N\neq 0}}\log(\ord{c_0+N})&\,\Lambda(\ord{c_0+N})\\
    &=
    \sum_{c_0\in \mathscr B_{A,N}}\log(\ord{c_0+N})\Lambda(\ord{c_0+N})\,+\, \sum_{c_0\in \mathscr A_{A,N}}\log(\ord{c_0+N})\Lambda(\ord{c_0+N})\\
    &=
    \sum_{c_0\in \mathscr B_{A,N}}\left(\log M+O(\log \log H)\right)\Lambda(\ord{c_0+N})\,+\, O\left(\frac{M(\log M)^2}{(\log H)^A} \right)\\
        &=\sum_{\substack{c_0\in [-H,H]\\c_0+N\neq 0}}\left(\log M+ O(\log \log H)\right)\Lambda(\ord{c_0+N})
        \\&
        -\,\sum_{c_0\in \mathscr {A}_{A,N}}\left(\log M+O(\log \log H)\right)\Lambda(\ord{c_0+N})+O\left(\frac{M(\log M)^2}{(\log H)^A}\right)\\
    &=
     \sum_{\substack{c_0\in [-H,H]\\c_0+N\neq 0}}\left(\log M +O(\log \log H)\right)\Lambda(\ord{c_0+N})\,+\, O\left(\frac{M(\log M)^2}{(\log H)^A}\right)\,.\qedhere
\end{align*}
\end{proof}

We rewrite the Prime Number Theorem in a convenient form.
\begin{lemma}\label{lemma_s1} Let $H>0$ and $N\in \Z$. Then, for every $A>0$ we have $$\sum_{\substack{c_0\in [-H,H]\\c_0+N\neq 0}}\Lambda\left(\ord{c_0+N}\right)= {2H}+O\left(\frac {M} {(\log M)^A}\right)\,,$$
where the implied constant depends only on $A$.
\end{lemma}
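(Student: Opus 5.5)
The plan is to reduce the statement to the Prime Number Theorem with the de la Vallée Poussin error term, $\psi(y)=y+O\bigl(y\exp(-c\sqrt{\log y})\bigr)$, which in particular gives $\psi(y)=y+O_A\bigl(y/(\log y)^A\bigr)$ for every $A>0$. As $c_0$ runs over $[-H,H]$, the integer $n=c_0+N$ runs over the interval $[N-H,N+H]$ (for $H$ an integer; otherwise over $[N-\lfloor H\rfloor,N+\lfloor H\rfloor]$, which changes the count by $O(1)$). After excluding $n=0$, the quantity $\ord{n}$ ranges over positive integers, and $M=\max\{\ord{N-H},\ord{N+H}\}$ is exactly the largest value of $\ord{n}$ on this interval.

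I would split into two cases according to whether the interval contains $0$.

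\emph{Case $\ord N\ge H$.} The interval lies entirely on one side of $0$. By symmetry we may take $N\ge H$, so $\ord{n}=n$ and the sum equals $\psi(N+H)-\psi(N-H-1)$. Applying the PNT to both endpoints gives
\[
2H+O\bigl(M/(\log M)^A\bigr)+O\bigl((N-H)/(\log(N-H))^A\bigr).
\]
The first error term is fine. The second needs care when $N-H$ is small: the function $y/(\log y)^A$ is increasing for $y\ge e^A$, so the term is $\ll_A M/(\log M)^A$. For $y<e^A$ we have $\psi(y)=O_A(1)$, which is also absorbed. One small point: the bound should be read as holding for $M\ge 2$, with the case $M$ bounded trivial and absorbed into the implied constant.

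\emph{Case $\ord N<H$.} The interval straddles $0$. The sum over $n\neq 0$ splits into positive $n$ up to $N+H$ and negative $n$ down to $N-H$, that is $\ord{n}\le H-N$. The sum therefore equals $\psi(H+N)+\psi(H-N)$, up to $O(\log M)$ from endpoint conventions. By the PNT this is
\[
(H+N)+(H-N)+O_A\bigl(M/(\log M)^A\bigr)=2H+O_A\bigl(M/(\log M)^A\bigr),
\]
again using that both arguments are at most $M$ and the monotonicity remark above.

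The argument has no real obstacle. The only care needed is uniformity: making sure the error terms at the smaller endpoint, which may be tiny or comparable to $M$, are uniformly bounded by $M/(\log M)^A$, and that boundary effects (non-integer $H$, whether endpoints are included, the excluded $n=0$) contribute only $O(\log M)$, which is harmless. The implied constant depends only on $A$ through the PNT error term.
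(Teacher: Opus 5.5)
Your proposal is correct and takes the paper's approach. The paper states this lemma without proof, as a reformulation of the Prime Number Theorem, and your argument supplies that omitted routine verification: you split according to whether $[N-H,N+H]$ contains $0$, use $\psi(y)=y+O_A\bigl(y/(\log y)^A\bigr)$, and control the smaller endpoint via the monotonicity of $y/(\log y)^A$ for $y\ge e^A$. Your caveat that the bound should be read for $M\ge 2$ is a sensible clarification; it is automatic in the paper's applications, where $H>2$.
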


\begin{prop}\label{prop_est_diag_gen}
    Let $\delta,H>1$, $d\ge 2$ and let $x\le (\log H)^\delta$. Then, for all integers $0<m\le x$ and $c_1,\ldots,c_d\in [-H,H]$ with $c_d>0$ we have $$\sum_{\substack{c_0\in [-H,H]\\P(m)\neq 0}}\Lambda(\ord{P(m)})^2=2H(\log H)+O\left(H(\log\log H)\right)\,,$$
    where we write $P(t)$ for $c_dt^d+\ldots+c_1t+c_0$.
      In particular, the implied constant depends only on $\delta $ and not on $c_1,\ldots,c_d$.
\end{prop}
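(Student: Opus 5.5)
We combine Lemmas~\ref{lemma_d1}, \ref{lemma_d2} and \ref{lemma_s1}, with $N=c_1m+\ldots+c_dm^d$ and $M=\max\{\ord{-H+N},\ord{H+N}\}$ as above. The first task is to bound the size of $M$. Since $\ord{c_i}\le H$ and $m\le x\le(\log H)^\delta$, we have
$$\ord N\le dH x^d\le dH(\log H)^{d\delta},$$
so $H\le M\le H\bigl(1+d(\log H)^{d\delta}\bigr)$. It follows that $\log M=\log H+O(\log\log H)$ and $M\ll H(\log H)^{d\delta}$. Strictly, the implied constants then depend on $d$ and $\delta$, which is harmless. The lower bound $M\ge H$ holds because at least one of $\ord{N\pm H}$ is at least $H$.

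Next we split the sum as in the decomposition preceding Lemma~\ref{lemma_d1}:
$$\sum_{c_0}\Lambda^2=\sum_{c_0}(\Lambda^2-\Lambda\log)+\sum_{c_0}\Lambda\log.$$
By Lemma~\ref{lemma_d1}, the first sum is $O(M^{1/2}\log M)=O\bigl(H^{1/2}(\log H)^{d\delta/2+1}\bigr)$, which is $O(H)$. For the second sum, Lemma~\ref{lemma_d2} with some fixed $A$ gives
$$\sum_{c_0}\Lambda\log=\bigl(\log M+O(\log\log H)\bigr)\sum_{c_0}\Lambda(\ord{c_0+N})+O\!\left(\frac{M(\log M)^2}{(\log H)^A}\right).$$
The $O(\log\log H)$ factor can be pulled out uniformly because $\Lambda\ge0$. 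Taking $A=d\delta+2$, the error term is $O(H)$.

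Lemma~\ref{lemma_s1}, applied with exponent $A'=d\delta+1$, gives
$$\sum_{c_0}\Lambda(\ord{c_0+N})=2H+O\!\left(\frac{M}{(\log M)^{A'}}\right)=2H+O\!\left(\frac{H}{\log H}\right).$$
This step uses $\log M\asymp\log H$. Multiplying by $\log M=\log H+O(\log\log H)$ yields
$$2H\log H+O(H\log\log H)+O(H),$$
and adding the $O(H)$ contributions from the previous step gives the claim.

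The only delicate point is the uniformity in $c_1,\ldots,c_d$ and $m$. This is secured entirely by the two-sided bound $H\le M\ll H(\log H)^{d\delta}$, since it makes every error term depend only on $H$, $d$ and $\delta$. One subtlety is Lemma~\ref{lemma_s1} itself: it is a prime number theorem count over an interval of length $2H$ that may sit far from the origin, at distance up to $M$. Its error $M/(\log M)^{A'}$ is nonetheless acceptable, precisely because $M$ exceeds $H$ by at most a power of $\log H$, which the free exponent $A'$ absorbs. That absorption step is where the restriction $x\le(\log H)^\delta$ is used.
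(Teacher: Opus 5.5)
Your proposal is correct and follows the paper's proof essentially step for step. It uses the same split $\Lambda^2=(\Lambda^2-\Lambda\log)+\Lambda\log$, Lemmas~\ref{lemma_d1}--\ref{lemma_s1}, and the bound $M\ll H(\log H)^{d\delta}$ to get $\log M=\log H+O(\log\log H)$ and to let a large $A$ absorb the error terms. You also state explicitly that $M\ge H$, so $\log M\ge\log H$; the paper uses this only implicitly when it writes $M/(\log M)^A\ll Hx^d/(\log H)^A$.
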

\begin{proof}
     Applying Lemmas~\ref{lemma_d1}--\ref{lemma_s1}, we obtain
     \begin{align*}
        \sum_{\substack{c_0\in [-H,H]\\c_0+N\neq 0}}\Lambda(\ord{c_0+N})^2
        &=
        \sum_{\substack{c_0\in [-H,H]\\c_0+N\neq 0}} \left(\Lambda^2-\Lambda\cdot \log \right)(\ord{c_0+N})+O\left(M^{1/2}(\log M)^2\right)\\
        &=\left(\log M+O\left(\log \log H\right)\right)\sum_{\substack{c_0\in [-H,H]\\c_0+N\neq 0}}\Lambda(\ord{c_0+N})+O\left(\frac M{(\log M)^A}\right)\\  
        &=
        2H\log M+O\left(\frac M{(\log M)^A}+H\log\log H\right)\,.
    \end{align*}
    Since $N\ll m^dH$ and $m\le x\le (\log H)^\delta$, we have $\frac{M}{(\log M)^A}\ll\frac{Hx^d}{(\log H)^A}$, so the error term $O\left(\frac M{(\log M)^A}\right)$ can be made asymptotically smaller than $H(\log\log H)$ by taking $A$ big enough.
    The same considerations on $N$ and $m$ also imply that  $\log M=\log H+O\left(\log\log H\right)$, which is enough to conclude. 
\end{proof}
As an immediate corollary, we have the following result.
\begin{prop}\label{prop_est_diag}
    In the setting of Theorem~\ref{teo_mainabs}, we have $$\frac{1}{2^dH^{d+1}}\sum_{P\in \pol d H}\sum_{m\le x}\Lambda(\ord{P(m)})^2=  x(\log H)+O\left(x(\log\log H)\right)   \,.$$
\end{prop}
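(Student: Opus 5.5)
The plan is to sum Proposition~\ref{prop_est_diag_gen} over the remaining variables. First, use the decomposition (\ref{eq_split}) to swap the order of summation in the left-hand side: it becomes the sum over $c_1,\ldots,c_d\in[-H,H]$ with $c_d>0$ and over integers $m\le x$ of the inner sum over $c_0\in[-H,H]$ with $P(m)\neq 0$. Here I read the sum $\sum_{m\le x}$ over positive integers, in line with the definition (\ref{def_psiabs}). The term $m=1$ differs from the range $1<m\le x$ there, but it contributes only $O(\log H)$ per polynomial on average, and this is absorbed into the error term.

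For each fixed tuple $(c_1,\ldots,c_d,m)$, apply Proposition~\ref{prop_est_diag_gen}. Its hypotheses are $0<m\le x\le(\log H)^\delta$ and $c_d>0$, which hold here. It gives the inner sum as
\[
2H\log H+O(H\log\log H),
\]
and the implied constant depends only on $\delta$ (and $d$), not on $c_1,\ldots,c_d$ or $m$. This uniformity is the one point that needs care: it is exactly what allows summing the error terms over all tuples without loss.

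Now count the tuples. The number of admissible $(c_1,\ldots,c_d)$ is $\#\{c_d\in(0,H]\}\cdot(2\lfloor H\rfloor+1)^{d-1}$, which equals $2^{d-1}H^d(1+O(1/H))$. The number of $m$ is $\lfloor x\rfloor=x+O(1)$. Multiplying these counts by $2H\log H+O(H\log\log H)$ and dividing by $2^dH^{d+1}$ gives
\[
(x+O(1))\bigl(\log H+O(\log\log H)\bigr)\bigl(1+O(1/H)\bigr).
\]
The normalisation works out because the factor $2H$ from the $c_0$-count, combined with $2^{d-1}H^d$, gives exactly $2^dH^{d+1}$.

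It remains to absorb the rounding errors. The cross term $O(\log H)$ coming from the $O(1)$ in $\lfloor x\rfloor$ must be dominated by $x\log\log H$. This holds provided $x\gg\log H/\log\log H$. Otherwise it must be treated as part of the setting; since $\lfloor x\rfloor\asymp x$ for $x>2$, a cleaner route is to write the main term as $\lfloor x\rfloor\log H$ and note that $\lfloor x\rfloor$ and $x$ are interchangeable up to the stated error in the intended application. This bookkeeping of $\lfloor x\rfloor$ versus $x$ is the only mild obstacle; everything else is immediate from the uniformity in Proposition~\ref{prop_est_diag_gen}.
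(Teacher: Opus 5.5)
Your proposal is correct and is exactly the paper's argument: the paper's proof consists only of applying Proposition~\ref{prop_est_diag_gen}, uniformly in $(c_1,\ldots,c_d,m)$, to (\ref{eq_split}) and counting tuples. The $\lfloor x\rfloor$ versus $x$ issue you raise is genuine and the paper passes over it silently: the argument gives $\lfloor x\rfloor\log H+O(x\log\log H)$, which matches the stated form only for integer $x$ or $x\gg\log H/\log\log H$ (for bounded non-integer $x$ the statement as written is off by $(x-\lfloor x\rfloor)\log H$), and the same caveat applies to your side remark about dropping $m=1$, which is not needed anyway since the statement sums over all $m\le x$.
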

\begin{proof}
    It suffices to apply Proposition \ref{prop_est_diag_gen} to (\ref{eq_split}).
\end{proof}


\subsection{The non-diagonal term}\label{sec_nondiag}
We improve the estimate in~\cite[\S 4.1]{SS} for the non-diagonal term, obtaining an error term of $O(x\log \log H)$.

A straightforward modification of~\cite[Theorem 3.1]{SS} yields the following result.
\begin{prop} \label{prop_ss2} Let $H,A>1$, $\delta>0$, $d\ge 2$, and let $x\le (\log H)^\delta$. Then, for all integers $0< m_1<m_2\le x$ and $c_d,\ldots, c_2\in [-H,H]$ with $c_d>0$, we have
    $$\sum_{\substack{c_0,c_1\in [-H,H]\\P(m_1),P(m_2)\neq 0}}\Lambda(\ord{P(m_1)})\Lambda(\ord{P(m_2)})=(2H)^2\frac{m_2-m_1}{\phi(m_2-m_1)}+O_A\left(\frac{H^{2}}{(\log H)^A}\right)\,,$$
    where we write $P(t)$ for $c_dt^d+\ldots+c_1t+c_0$. In particular, the implied constants depend only on $A$ and $\delta$, and not on $c_d,\ldots,c_2$.
\end{prop}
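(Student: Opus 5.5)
The idea is to change variables so that the double sum becomes a twin-prime-type count over a long box, and then estimate that count with the circle method as in \cite[Theorem 3.1]{SS}. Fix $c_2,\ldots,c_d$ and set $Q(t)\coloneqq c_dt^d+\ldots+c_2t^2$, $h\coloneqq m_2-m_1$. We write
\[
u\coloneqq P(m_1)=c_0+c_1m_1+Q(m_1),\qquad v\coloneqq P(m_2)=u+c_1h+Q(m_2)-Q(m_1).
\]
The map $(c_0,c_1)\mapsto(u,v)$ is injective. Its image is the set of integer pairs $(u,v)$ with $v\equiv u+Q(m_2)-Q(m_1)\pmod h$ that lie in a parallelogram $\mathcal R$. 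This parallelogram has sides of length $\asymp H$ and $\asymp hH$, it sits at distance $O(x^dH)$ from the origin, and its area is $h(2H)^2$. The sum in Proposition~\ref{prop_ss2} therefore equals
\[
\sum_{\substack{(u,v)\in\mathcal R,\ uv\neq0\\ v\equiv u+r\ (\mathrm{mod}\ h)}}\Lambda(|u|)\Lambda(|v|),\qquad r\coloneqq Q(m_2)-Q(m_1).
\]
We then fibre over $u$. For each fixed $u$, the variable $v$ runs through an arithmetic progression modulo $h$, restricted to an interval of length $\asymp hH$. Since $h\le x\le(\log H)^\delta$, the Siegel--Walfisz theorem gives, for each $u$ with $\gcd(u+r,h)=1$,
\[
\sum_{v}\Lambda(|v|)=\frac{|I_u|}{\phi(h)}+O\left(\frac{x^dH}{(\log H)^{A'}}\right),
\]
where $I_u$ is the fibre of $\mathcal R$ over $u$. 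If $\gcd(u+r,h)>1$, only prime powers dividing $h$ can occur, which contributes negligibly. Summing the error over the $O(H)$ values of $u$ gives $O(H^2/(\log H)^A)$ once $A'$ is chosen large. It then remains to evaluate
\[
\frac1{\phi(h)}\sum_{u:\ (u+r,h)=1}\Lambda(|u|)\,|I_u|.
\]
The main obstacle is the coupling between $u$ and $h$. For a prime $u$ coprime to $h$, the condition $\gcd(u+r,h)=1$ is a congruence condition on $u$ modulo the primes $\ell\mid h$. It is exactly what produces a factor of the form $\prod_{\ell\mid h}(1-1/\ell)^{-1}\cdot(\ldots)$, depending on whether $\ell\mid r$.

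To handle it, we first note that $r=Q(m_2)-Q(m_1)$ is divisible by $h$, because $m_2-m_1$ divides $m_2^k-m_1^k$. Hence $u+r\equiv u\pmod h$, and the condition reduces to $\gcd(u,h)=1$, which holds automatically for primes $u\nmid h$. We then apply Siegel--Walfisz (or simply the prime number theorem in short intervals after partial summation) to $u\mapsto\Lambda(|u|)|I_u|$. Here $|I_u|$ is a piecewise-linear weight, and it sums to the area $h(2H)^2$ up to $O(hH)$. This yields
\[
\frac{h(2H)^2}{\phi(h)}+O\left(\frac{H^2}{(\log H)^A}\right).
\]
The uniformity in $c_2,\ldots,c_d$ comes from the fact that only the translate of $\mathcal R$ depends on them; that translate is $O(x^dH)=O(H(\log H)^{d\delta})$, and the Siegel--Walfisz error terms are uniform in shifts of this size. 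Finally, we must check that the cases $u=0$ or $v=0$ and the contribution of prime powers are negligible, which is routine.
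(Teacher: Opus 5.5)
Your proof is correct, but it takes a different route from the paper. The paper gets Proposition~\ref{prop_ss2} by rerunning the circle-method argument of \cite[Theorem 3.1]{SS}, using two Dirichlet kernels in \cite[Lemma 3.4]{SS} instead of $d+1$. The factor $\frac{m_2-m_1}{\phi(m_2-m_1)}$ is then read off from the singular series of the $2\times 2$ system in $(c_0,c_1)$.

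You never actually use the circle method, despite announcing it in your first sentence. Your argument rests on two facts:
\begin{itemize}
\item $(c_0,c_1)\mapsto(P(m_1),P(m_2))$ is a bijection onto a sublattice of index $h$;
\item $h\mid Q(m_2)-Q(m_1)$, which forces $v\equiv u \pmod h$.
\end{itemize}
So the count is not of twin-prime type at all. For fixed $u$, the difference $v-u$ still runs over about $H/m_1$ consecutive multiples of $h$. The problem therefore decouples into two steps.
\begin{itemize}
\item Primes $v$ in the single class $u \bmod h$. Siegel--Walfisz applies because $h\le x\le(\log H)^\delta$.
\item Primes $u$ in intervals of length about $H$ at height $X\ll x^dH$. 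The classical prime number theorem with de la Vall\'ee Poussin error is enough here. No short-interval input is needed, since these intervals have length $\gg X/(\log X)^{d\delta}$.
\end{itemize}

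What your route buys:
\begin{itemize}
\item It is self-contained and more elementary.
\item The factor $h/\phi(h)$ appears transparently, as the lattice index divided by $\phi(h)$.
\item Uniformity in $c_2,\dots,c_d$ is immediate, since these only translate the region.
\item The only ineffectivity is that of Siegel--Walfisz, which the $O_A$ in the statement already permits.
\end{itemize}
What the paper's route buys: the circle-method framework of \cite{SS} handles general systems of linear forms in the coefficients uniformly. Your decoupling exploits exactly the special structure the paper's comment points to: a $2\times 2$ system with unit coefficients on $c_0$ and determinant $h$.

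A few slips, none affecting the bounds because every error term saves an arbitrary power of $\log H$ while $x^{O(1)}\le(\log H)^{O(1)}$:
\begin{itemize}
\item $u$ ranges over $O(xH)$ values, not $O(H)$.
\item The parallelogram has sides $\asymp H$ and $\asymp m_2H$, not $hH$. Its area $h(2H)^2$ is correct.
\end{itemize}
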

\begin{comm}
   This version is obtained by using two Dirichlet kernels in \cite[Lemma 3.4]{SS} instead of $d+1$ kernels. The calculations for the main term are simpler than in \cite{SS}, since here we have a $2\times 2$ linear system in $c_0$ and $c_1$ in which the coefficients of $c_0$ are both $1$.
\end{comm}

By applying Proposition~\ref{prop_ss2} directly, we obtain
\begin{align}\label{eq3}
    \sum\limits_{P\in \pol d H}\sum\limits_{\substack{1\le m_1\neq m_2\le x\\P(m_1),P(m_2)\neq 0}}&\Lambda(\ord{P(m_1)})\Lambda(\ord{P(m_2)})\\
    &    =
    2\sum\limits_{1\le m_1<m_2\le x}\sum_{\substack{c_d,\ldots,c_2\in [-H,H]\\c_d>0}}\sum_{\substack{c_0,c_1\in [-H,H]\\P(m_1),P(m_2)\neq 0}}\Lambda(\ord{P(m_1)})\Lambda(\ord{P(m_2)})\notag\\
    &
    =2^{d+1}H^{d+1}\sum\limits_{1\le m_1<m_2\le x}\frac{m_2-m_1}{\phi(m_2-m_1)}+O_A\left(\frac{x^2H^{d+1}}{(\log H)^A}\right)\,.\notag
\end{align}
Since $x\le (\log H)^\delta$, the error term above is acceptable as long as $A>2\delta$.

\begin{lemma}\label{prop_sv_mix} For $x\ge 1$, we have
    $$\sum\limits_{1\le m_1<m_2\le x}\frac{m_2-m_1}{\phi(m_2-m_1)}=\frac{x^2}{2}\prod_{\substack{\ell\text{ prime}\\\ell=2}}^\infty\left(1+\frac{1}{\ell(\ell-1)}\right)+O(x\log x)\,.$$ 
\end{lemma}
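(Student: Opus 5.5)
Set $k=m_2-m_1$. For each $1\le k<x$ there are $\lfloor x\rfloor-k$ pairs $1\le m_1<m_2\le x$ with $m_2-m_1=k$, so the sum equals
$$\sum_{1\le k< x}(\lfloor x\rfloor-k)\frac{k}{\phi(k)}=\sum_{k<x}(x-k)\frac{k}{\phi(k)}+O\Big(\sum_{k<x}\frac{k}{\phi(k)}\Big).$$
The standard bound $\sum_{k\le x}k/\phi(k)\ll x$ makes the last term $O(x)$. It remains to show that $\sum_{k\le x}(x-k)k/\phi(k)=\frac{x^2}{2}C+O(x\log x)$, where $C=\prod_\ell(1+\frac1{\ell(\ell-1)})$.

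Write $k/\phi(k)=\sum_{q\mid k}g(q)$ with $g=\mu^2/\phi$. This works because $k/\phi(k)=\prod_{\ell\mid k}(1+\frac{1}{\ell-1})$ is multiplicative and equals $1+g(\ell)$ at primes. Swapping the order of summation gives
$$\sum_{k\le x}(x-k)\frac{k}{\phi(k)}=\sum_{q\le x}g(q)\sum_{\substack{k\le x\\ q\mid k}}(x-k)=\sum_{q\le x}g(q)\sum_{j\le x/q}(x-qj).$$
The inner sum is an elementary arithmetic-progression computation: with $J=\lfloor x/q\rfloor$ it equals $xJ-qJ(J+1)/2=\frac{x^2}{2q}+O(x)$, uniformly in $q\le x$. (Writing $J=x/q-\theta$ with $0\le\theta<1$, the $x^2/q$ terms give exactly $x^2/(2q)$ and the remainder is $O(x+q)=O(x)$.)

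Summing over $q$ gives
$$\frac{x^2}{2}\sum_{q\le x}\frac{g(q)}{q}+O\Big(x\sum_{q\le x}g(q)\Big).$$
We now bound the three pieces.
\begin{itemize}
\item The sum $\sum_{q}g(q)/q$ converges. By its Euler product it equals $\prod_\ell(1+\frac{1}{\ell(\ell-1)})=C$.
\item The tail is $\sum_{q>x}g(q)/q\ll 1/x$, which contributes $O(x)$. This follows by partial summation from the next item, since $g(q)\ll(\log\log q)/q$ on average.
\item The classical estimate $\sum_{q\le x}\mu^2(q)/\phi(q)=\log x+O(1)$, or simply the upper bound $\ll\log x$ from $\sum_{q\le x}1/\phi(q)\ll\log x$, bounds the error term by $O(x\log x)$.
\end{itemize}

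The main obstacle is ensuring that every error is at most $O(x\log x)$. In the order swap, the $O(x)$ error per $q$ must be weighted by $g(q)$ and not by $1/q$ times $x^2$. This is exactly where the harmonic-type sum $\sum g(q)$ produces the $\log x$, and so the stated error term. The tail bound $\sum_{q>x}\mu^2(q)/(q\phi(q))\ll 1/x$ also needs a justification; I would obtain it from $\sum_{q\le y}\mu^2(q)/\phi(q)\ll \log y$ by partial summation, which gives $\ll(\log x)/x$ and hence still an acceptable $O(x\log x)$ contribution.
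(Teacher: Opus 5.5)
Your proof is correct and follows essentially the same route as the paper: substitute $k=m_2-m_1$, expand $k/\phi(k)=\sum_{q\mid k}\mu^2(q)/\phi(q)$, swap the order of summation, complete $\sum_q \mu^2(q)/(q\phi(q))$ to the Euler product, and bound the remaining error by $x\sum_{q\le x}1/\phi(q)\ll x\log x$. The only difference is cosmetic: you keep the weight $x-k$ intact and evaluate $\sum_{j\le x/q}(x-qj)=\frac{x^2}{2q}+O(x)$ directly, whereas the paper splits the sum as $x\sum_{t\le x}t/\phi(t)-\sum_{t\le x}t^2/\phi(t)$ and evaluates the two pieces separately, which needs the sharper asymptotic $\sum_{t\le x}t/\phi(t)=Cx+O(\log x)$, with $C$ your Euler product.
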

\begin{proof}Letting $t=m_2-m_1$, we obtain
    \begin{align*}
        \sum\limits_{t\le x}(x-t+O(1))\frac t{\phi(t)} &=(x+O(1))\sum\limits_{t\le x}\frac t {\phi(t)}\,-\,\sum\limits_{t\le x}\frac{t^2}{\phi(t)}\,.
    \end{align*}  Note that
    $$\sum_{t\le x}\frac{t}{\phi(t)}=\sum_{t\le x}\sum_{k\mid t}\frac{\mu(k)^2}{\phi(k)}=\sum_{k\le x}\frac{\mu(k)^2}{\phi(k)}\left\lfloor \frac x k\right\rfloor\ll x\sum_{k\le x}\frac{\mu(k)^2}{k\phi(k)}\ll x\,,$$ and thus, by partial summation, $$\sum_{t\le x}\frac 1 {\phi(t)}\ll \log x\qquad\qquad\text{and}\qquad\qquad\sum_{t> x}\frac{1}{t\phi(t)}\ll\frac 1 x\,.$$ Hence,     
    \begin{align*}
        \sum_{t\le x}\frac{t^2}{\phi(t)}&=
        \sum\limits_{t\le x}t\sum\limits_{k\mid t}\frac {\mu(k)^2}{\phi(k)}
        =
        \sum\limits_{k\le x}\frac{\mu(k)^2k}{\phi(k)}\sum\limits_{r\le x/k}r
        =
        \sum\limits_{k\le x}\frac{\mu(k)^2k}{\phi(k)}\frac{x^2}{2k^2}+O\left(x\sum\limits_{k\le x}\frac{1}{\phi(k)}\right)\\
        &=
        \frac{x^2}{2}\sum\limits_{k\le x}\frac{\mu(k)^2}{k\phi(k)}+O\left(x\log x\right)
        =
        \frac{x^2}{2}\sum\limits_{k=1}^\infty\frac{\mu(k)^2}{k\phi(k)}+O\left(x\log x\right)\\
         &=
        \frac{x^2}{2}\prod\limits_{\substack{\ell=2\\\ell\text{ prime}}}^\infty\left(1+\frac 1{\ell(\ell-1)}\right)+O\left(x\log x\right)
        \,,
        \end{align*}
        and similarly \[
            \sum_{t\le x}\frac t{\phi(t)}
            =
    x\prod\limits_{\substack{\ell=2\\\ell\text{ prime}}}^\infty\left(1+\frac 1{\ell(\ell-1)}\right)+O\left(\log x\right)\,.\qedhere
        \]
        \end{proof}


\begin{prop}\label{prop_nondiag}
    In the setting of Proposition~\ref{prop_est_psi_sq}, we have
    $$\frac{1}{2^dH^{d+1}}\!\sum\limits_{P\in \pol d H}\sum\limits_{\substack{1\le m_1\neq m_2\le x\\P(m_1),P(m_2)\neq 0}}\!\!\!\!\!\! \Lambda(\ord{P(m_1)})\Lambda(\ord{P(m_2)})
    =x^2\!\!\!\prod_{\substack{\ell <\mcal L\\\ell\text{ prime}}}\!\!\left(1+\frac 1 {\ell(\ell-1)}\right)+O\!\left(x\log\log H+\frac{x^2}{\mcal L}\right)\,.$$
\end{prop}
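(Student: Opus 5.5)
The plan is to combine (\ref{eq3}) with Lemma~\ref{prop_sv_mix}, and then replace the infinite Euler product by its truncation at $\ell<\mcal L$.

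First, divide (\ref{eq3}) by $2^dH^{d+1}$. The main term becomes
$$2\sum_{1\le m_1<m_2\le x}\frac{m_2-m_1}{\phi(m_2-m_1)}.$$
By Proposition~\ref{prop_ss2}, the error term is $O_A\!\left(x^2/(\log H)^A\right)$. Choosing $A>2\delta$ and using $x\le(\log H)^\delta$, this error is $O(1)$, hence $O(x\log\log H)$.

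Second, Lemma~\ref{prop_sv_mix} evaluates the main term as
$$x^2\prod_{\ell}\left(1+\frac1{\ell(\ell-1)}\right)+O(x\log x).$$
Since $x\le(\log H)^\delta$, we have $\log x\le \delta\log\log H$, so this error is $O_\delta(x\log\log H)$.

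Third, we truncate the product. Let $C_{\mcal L}=\prod_{\ell<\mcal L}\left(1+\frac1{\ell(\ell-1)}\right)$. Then
$$\prod_{\ell}\left(1+\frac1{\ell(\ell-1)}\right)=C_{\mcal L}\prod_{\ell\ge\mcal L}\left(1+\frac1{\ell(\ell-1)}\right).$$
For the tail factor, use $\log(1+u)\le u$ to get
$$\log\prod_{\ell\ge\mcal L}\left(1+\frac1{\ell(\ell-1)}\right)\le\sum_{n\ge\mcal L}\frac{1}{n(n-1)}\ll\frac1{\mcal L}.$$
The tail factor is therefore $1+O(1/\mcal L)$. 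Since $C_{\mcal L}$ is bounded uniformly in $\mcal L$, the full product equals $C_{\mcal L}+O(1/\mcal L)$. Multiplying by $x^2$ gives the error $O(x^2/\mcal L)$. Adding this to the errors from the first two steps gives the stated asymptotic.

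There is no real obstacle in this argument. The only step needing care is the first: one must check that the error in Proposition~\ref{prop_ss2} is uniform in $c_2,\dots,c_d$, so that summing it over these coefficients and over the $O(x^2)$ pairs $(m_1,m_2)$ is legitimate. Proposition~\ref{prop_ss2} states this uniformity explicitly. One should also note that the ordered pairs with $m_1\neq m_2$ account for the factor $2$ in (\ref{eq3}).
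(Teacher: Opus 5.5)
Your proposal is correct and follows the paper's proof: combine (\ref{eq3}) with Lemma~\ref{prop_sv_mix}, then truncate the Euler product at $\ell<\mcal L$. The only differences are that you prove the truncation error $O(x^2/\mcal L)$ directly (via $\log(1+u)\le u$ and the telescoping tail $\sum_{n\ge\mcal L}\frac{1}{n(n-1)}\ll\frac{1}{\mcal L}$) where the paper cites \cite[Lemma 4.5]{SS}, and that you spell out how the $O(x\log x)$ and $O(x^2/(\log H)^A)$ errors are absorbed into $O(x\log\log H)$ using $x\le(\log H)^\delta$.
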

\begin{proof}
    Combine (\ref{eq3}) with Lemma~\ref{prop_sv_mix} to write the sum on the left-hand side as $$\frac{1}{2^dH^{d+1}}\sum\limits_{P\in \pol d H}\sum\limits_{\substack{1\le m_1\neq m_2\le x\\P(m_1),P(m_2)\neq 0}}\Lambda(\ord{P(m_1)})\Lambda(\ord{P(m_2)})
    =x^2\prod_{\substack{\ell\text{ prime}\\\ell=2}}^\infty\left(1+\frac 1 {\ell(\ell-1)}\right)+O\left(x\log\log H\right)\,.$$ By~\cite[Lemma 4.5]{SS}, truncating the product above at $\ell<\mcal L$ introduces the error term $O\left(\frac{x^2}{\mcal L}\right)$.
\end{proof}


\section{The term $\sum_P{\mathfrak S_P}^2$}\label{sec_Ssq}

\subsection{Hooley's neutralisers}
For $w>1$, let $\mcal P(w)\coloneqq\prod_{\text{prime } \ell<w}\ell$. We recall Brun's sieve~\cite[Lemma 6.3]{IK}.
\begin{lemma}[Brun's sieve]\label{lemma_sieve}
    Let $\kappa>0$ and $y>1$. There are two sequences of real numbers $(\lambda_k^+),\,(\lambda_k^-)$ depending on $\kappa,y$, with $$
         \lambda_1^-=\lambda_1^+=1,\qquad\qquad -1\le \lambda_k^+,\lambda_k^- \le1\quad\text{ for }k< y,\qquad\qquad   \lambda_k^+=\lambda_k^-=0\quad\text{ for } k\ge y\,$$
         and such that for any $n>1$ we have $$\sum_{k\mid n}\lambda_k^-\le 0\le \sum_{k\mid  n}\lambda^+_k\,.$$ Furthermore, for every multiplicative function $h\colon\N\to [0,1)$ for which there is $K>0$ such that for every $2\le y_1<y_2\le y$ we have \begin{equation}\label{cond_Brun}
        \prod_{\substack{\ell\text{ prime}\\y_1\le \ell<y_2}}(1-h(\ell))^{-1}\le \left(\frac{\log y_2}{\log y_1}\right)^\kappa\left(1+\frac{K}{\log y_1}\right)\,,
    \end{equation} then for every $w<y$ we have
    \begin{equation}
        \label{concl_sieve}\sum_{k\mid \mcal P(w)}\lambda^\pm_k h(k)=\prod_{\substack{\ell<w\\\ell\text{ prime}}}(1-h(\ell))\cdot \left(1+O\left(e^{-\frac{\log y}{\log w}}\right)\right)\,.
    \end{equation}
\end{lemma}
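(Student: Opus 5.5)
This lemma is the classical fundamental lemma of sieve theory in Brun's form, essentially \cite[Lemma 6.3]{IK}. Our plan is to follow the standard construction of Brun's pure/combinatorial sieve rather than to invent anything new.

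\textbf{Construction of the weights.} Fix an even integer $r$ and an odd integer $r'$, both to be chosen in terms of $\kappa$ and $y$. Following Brun's truncation, we set
\[
\lambda_k^+=\mu(k)\mathbb 1_{\omega(k)\le r},\qquad \lambda_k^-=\mu(k)\mathbb 1_{\omega(k)\le r'}
\]
for squarefree $k$, refined by the usual Brun restrictions on the prime factors $p_1>p_2>\dots$ of $k$. These restrictions require $p_1\cdots p_j$ to be small compared with $y$, so that $\lambda_k^\pm=0$ for $k\ge y$. Clearly $\lambda_1^\pm=1$ and $|\lambda^\pm_k|\le1$.

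\textbf{Sign conditions.} The inequalities $\sum_{k\mid n}\lambda^-_k\le0\le\sum_{k\mid n}\lambda^+_k$ for $n>1$ come from the Bonferroni inequalities. Truncating $\sum_{j}(-1)^j\binom{\omega(n)}{j}=0$ at an even level gives a nonnegative quantity, and truncating at an odd level gives a nonpositive one. The Brun-type version relies on Buchstab's identity, which writes the difference between $\sum_{k\mid n}\lambda_k^\pm$ and $\sum_{k\mid n}\mu(k)$ as a sum of terms with constant sign.

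\textbf{Main-term estimate.} This is the main obstacle. For $h$ satisfying the dimension condition (\ref{cond_Brun}), we write
\[
\sum_{k\mid\mcal P(w)}\lambda_k^\pm h(k)-\prod_{\ell<w}(1-h(\ell))
\]
via Buchstab as a sum over the "discarded" $k$. Each such term is bounded by $\prod(1-h(\ell))$ times a tail of the form $\sum_{j>r}\frac{(\sum_{\ell<w} h(\ell)/(1-h(\ell)))^j}{j!}$. By (\ref{cond_Brun}), $\sum_{\ell<w}h(\ell)/(1-h(\ell))\le\kappa\log\log w+O(1)$. Choosing $r\approx \log y/\log w$ (up to constants depending on $\kappa$) and applying Rankin's trick, with the weight $(k\text{-size}/y)^{\varepsilon}$ and $\varepsilon\asymp 1/\log w$, bounds the error by $e^{-s\log s}\ll e^{-s}$ with $s=\log y/\log w$. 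This yields (\ref{concl_sieve}).

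In practice, since the statement is exactly the fundamental lemma, we would simply cite \cite[Lemma 6.3]{IK}, noting that it holds uniformly for all $w<y$ with implied constant depending on $\kappa,K$. We would then check that their hypothesis on $h$ matches (\ref{cond_Brun}).
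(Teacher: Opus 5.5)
Your last step, citing \cite[Lemma 6.3]{IK} and checking that its dimension hypothesis is (\ref{cond_Brun}), is exactly what the paper does: the lemma is recalled from Iwaniec--Kowalski with no proof. As a citation your proposal is fine. The self-contained sketch before it, however, does not prove the statement and should not be presented as a proof.

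Its analysis is that of the pure Brun sieve, truncating at $\omega(k)\le r$ with $r\approx s=\log y/\log w$. This has two problems. First, it makes the weights depend on $w$, whereas the lemma asks for weights depending only on $\kappa,y$ and valid for every $w<y$. Second, and more seriously, it does not give relative error $O(e^{-s})$. When $h(\ell)\approx\kappa/\ell$ (as in both applications in the paper), $T=\sum_{\ell<w}h(\ell)/(1-h(\ell))=\kappa\log\log w+O(1)$. Even granting your bound, $\sum_{j>r}T^j/j!$ is small only once $r\gg T$, i.e.\ $s\gg\log\log w$; for bounded $s$ it is of size $e^{T}\asymp(\log w)^{\kappa}$. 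This is not an artefact of the estimate: for bounded $r$ the exact Buchstab remainder $\sum_{k\mid\mathcal P(w),\,\omega(k)=r+1}h(k)\prod_{\ell<p(k)}(1-h(\ell))$, with $p(k)$ the least prime factor of $k$, is itself $\gg(\log w)^{\kappa}\prod_{\ell<w}(1-h(\ell))$. Rankin's trick with $\varepsilon\asymp1/\log w$ does not repair this. The pure sieve discards by number of prime factors, not by size, and summing a weight $(\cdot/y)^{\varepsilon}$ over all $k\mid\mathcal P(w)$ costs a factor $\prod_{\ell<w}(1+h(\ell)\ell^{O(\varepsilon)})\gg(\log w)^{\kappa}$, again a power of $\log w$. (In the paper's application $s\gg\log H/\log\log H$ far exceeds $\log\log w$, so pure-sieve weights depending on $z$ would suffice there; they just do not give the lemma as stated.)

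What proves the lemma is to drop the cap on $\omega(k)$ and use the combinatorial ($\beta$-sieve) support. Set $\lambda_k^{+}=\mu(k)$ if $k=p_1\cdots p_r$ with $y>p_1>\cdots>p_r$ satisfies $p_1\cdots p_{m-1}p_m^{\beta+1}<y$ for every odd $m\le r$ (every even $m$ for $\lambda^-$), with $\beta$ large in terms of $\kappa$, and $\lambda_k^{\pm}=0$ otherwise. These weights depend only on $\kappa,y$ and vanish for $k\ge y$. Since the constraints are tested only at levels of one parity, Buchstab's identity gives the sign conditions. If the first violated constraint of $k\mid\mathcal P(w)$ is at level $m$, then $m>s-\beta$ and $p_m^{\beta+1}\ge y/(p_1\cdots p_{m-1})$. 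This lower bound on $p_m$, iterated along the chain, is what lets (\ref{cond_Brun}) compare $\prod_{\ell<p_m}(1-h(\ell))$ with $\prod_{\ell<w}(1-h(\ell))$ and produce $e^{-s}$ uniformly in $s$.

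Finally, the lower-bound sign condition cannot hold for every $n>1$ as literally written, in your sketch or in the lemma: if $n\ge y$ is prime, then $\sum_{k\mid n}\lambda_k^-=\lambda_1^-=1$. The construction gives the inequality for $n>1$ all of whose prime factors are $<y$. That is all that is used when Lemma~\ref{lemma_hooley} is applied with $n=\mathcal P(z)$, $z<y$, since its proof only invokes the sieve inequality at divisors of $n$.
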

We use~\cite[Lemma 2.1]{Wi}, which gives a flexible version of Hooley's neutralisers, originally introduced in~\cite{Ho}. Note that~\cite[Lemma 2.1]{Wi} only proves the upper bound, but the lower bound can be proved in the same way.
\begin{lemma}[Hooley's neutralisers]\label{lemma_hooley}
    Let $f\colon \N\rightarrow [0,1]$ be a multiplicative function. Let $(\lambda_k^+)$ and $(\lambda_k^-)$ be sequences of real numbers satisfying, for every $n\in \N$, $$\sum_{k\mid n }\lambda_k^-\le\mathbb 1_{n=1} \le\sum_{k\mid n}\lambda_k^+\,.$$ Let $\hat f(n)\coloneqq \prod_{\text{prime }\ell\mid n}(1-f(\ell))$. Then for every square-free $n\in \N$ we have $$\sum\limits_{\substack{k\mid n}}\lambda^-_k\hat f(k)\le f(n)\le\sum\limits_{\substack{k\mid n}}\lambda^+_k\hat f(k)\,.$$
\end{lemma}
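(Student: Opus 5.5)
The plan is to interpret both sides probabilistically, as expectations against a nonnegative weight on the divisors of $n$. Put $g(\ell)\coloneqq 1-f(\ell)\in[0,1]$ for primes $\ell$, so that $\hat f(k)=\prod_{\ell\mid k}g(\ell)$. Since $n$ is square-free and $f$ is multiplicative, we have $f(n)=\prod_{\ell\mid n}f(\ell)$. For each divisor $m\mid n$, I will define the weight
$$w(m)\coloneqq \prod_{\ell\mid m}g(\ell)\prod_{\ell\mid n/m}f(\ell)\,.$$
This is nonnegative because $f$ takes values in $[0,1]$. Because $n$ is square-free, the divisors $m\mid n$ correspond to subsets of the primes dividing $n$. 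Expanding $\prod_{\ell\mid n}(f(\ell)+g(\ell))=1$ then shows that $\sum_{m\mid n}w(m)=1$. In words, $w$ is the law of the random divisor $m=\prod_{\ell\mid n,\ \ell\in S}\ell$, where each prime $\ell\mid n$ is placed in $S$ independently with probability $g(\ell)$.

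The key identity will be, for each $k\mid n$,
$$\hat f(k)=\sum_{\substack{m\mid n\\ k\mid m}}w(m)\,.$$
To check it, factor the sum over $m=k\cdot m'$ with $m'\mid n/k$. This gives $\prod_{\ell\mid k}g(\ell)$ times $\prod_{\ell\mid n/k}(f(\ell)+g(\ell))=1$. (Probabilistically, $\hat f(k)$ is just the probability that every prime of $k$ lies in $S$.) We also note that $w(1)=\prod_{\ell\mid n}f(\ell)=f(n)$.

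Swapping the order of summation, for either choice of sign we get
$$\sum_{k\mid n}\lambda^\pm_k\hat f(k)=\sum_{m\mid n}w(m)\sum_{k\mid m}\lambda_k^\pm\,.$$
By hypothesis, $\sum_{k\mid m}\lambda^-_k\le \mathbb 1_{m=1}\le\sum_{k\mid m}\lambda^+_k$ for every $m$. Since $w(m)\ge0$, we can multiply these inequalities by $w(m)$ and sum over $m\mid n$. Both bounds then collapse to $\sum_{m\mid n}w(m)\mathbb 1_{m=1}=w(1)=f(n)$, which proves the lemma.

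The only real point is the nonnegativity of the weights $w(m)$. This is exactly where the assumption $f(\ell)\in[0,1]$ (equivalently $g(\ell)\in[0,1]$) is used; square-freeness of $n$ is what makes the subset expansion and the formula $f(n)=\prod_{\ell\mid n} f(\ell)$ valid. Everything else is a finite rearrangement of sums, so I expect no genuine obstacle beyond setting up these weights correctly.
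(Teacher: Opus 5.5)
Your proof is correct. It is the standard neutraliser argument: you write $\hat f(k)=\sum_{k\mid m\mid n}w(m)$ with nonnegative weights $w$, swap the sums, and use $w(1)=f(n)$. The paper gives no proof of its own; it cites \cite[Lemma 2.1]{Wi} for the upper bound and says the lower bound is analogous, whereas your argument gives both bounds at once.
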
 We shall apply this with $n=\mcal P(z)$ and $f(n)=\mfS_P(z)$. For $(\lambda_k^\pm)$ we take Brun sieve weights from Lemma~\ref{lemma_sieve}. This has the advantage of involving only small values of $k$, since $\lambda_k^\pm$ are supported on $[1,y]$.
\begin{lemma}\label{lemma_mult} Let $g\colon \Z[t]\times \N\rightarrow \C$ be a function such that\begin{itemize}
    \item for any fixed $P\in \Z[t]$, $g_P\colon\N\rightarrow \C$ is a multiplicative function;
    \item for any fixed $k\in \N$, the value of $g_P(k)$ depends only on the residue class of $P$ in $(\Z/k\Z)[t]$.
\end{itemize}
Fix $d\ge 1$. If we define $G\colon \N\rightarrow \C$ as $$G(k)\coloneqq\sum_{\substack{P_0\in (\Z/k\Z)[t] \\\deg P_0\le d}}g_{P_0}(k)\,,$$ then $G$ is multiplicative.
\end{lemma}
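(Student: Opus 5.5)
The plan is to use the Chinese Remainder Theorem to split the sum over polynomials modulo $k$ into a product over prime-power factors of $k$. Take coprime $k_1,k_2\in\N$ and put $k=k_1k_2$. The CRT isomorphism $\Z/k\Z\cong \Z/k_1\Z\times\Z/k_2\Z$ acts coefficientwise on polynomials. It therefore gives a bijection between polynomials $P_0\in(\Z/k\Z)[t]$ of degree at most $d$ and pairs $(P_1,P_2)$ with $P_i\in(\Z/k_i\Z)[t]$ of degree at most $d$. Here $P_i$ is the reduction of $P_0$ modulo $k_i$, and this correspondence is simply a bijection on the coefficient vectors $(\Z/k\Z)^{d+1}\cong(\Z/k_1\Z)^{d+1}\times(\Z/k_2\Z)^{d+1}$. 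Throughout, $g_{P_0}(k)$ is read as $g_{\tilde P}(k)$ for any integer lift $\tilde P\in\Z[t]$ of $P_0$; by the second hypothesis this does not depend on the lift.

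First I factor $g$ for a single polynomial. Fix $P_0$ with CRT image $(P_1,P_2)$, and fix an integer lift $\tilde P$ of $P_0$. Since $g_{\tilde P}$ is multiplicative and $(k_1,k_2)=1$,
\[
g_{\tilde P}(k)=g_{\tilde P}(k_1)\,g_{\tilde P}(k_2).
\]
The reduction of $\tilde P$ modulo $k_i$ is $P_i$, so by the second hypothesis $g_{\tilde P}(k_i)=g_{P_i}(k_i)$. Hence
\[
g_{P_0}(k)=g_{P_1}(k_1)\,g_{P_2}(k_2).
\]

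Next I sum over the bijection. Because the CRT map is a bijection, summing over all $P_0$ is the same as summing over all pairs $(P_1,P_2)$:
\[
G(k)=\sum_{P_1}\sum_{P_2} g_{P_1}(k_1)\,g_{P_2}(k_2)=G(k_1)\,G(k_2).
\]
For the value at $1$: $(\Z/1\Z)[t]$ has exactly one polynomial, and $g_P(1)=1$ for a multiplicative function (assuming, as is standard, that $g_P$ is not identically zero). Therefore $G(1)=1$, and $G$ is multiplicative.

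The only point that needs care is that $g_{P_0}(k)$ is well defined for a residue class $P_0$, and that a single integer lift can be used simultaneously for $k$, $k_1$ and $k_2$. Both follow directly from the second hypothesis, so the argument contains no genuine obstacle.
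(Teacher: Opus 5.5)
Your proposal is correct and is the same argument as the paper's, which says only that the result ``follows directly from the Chinese remainder theorem.'' You supply the details the paper leaves implicit: the coefficientwise CRT bijection, the factorisation $g_{P_0}(k)=g_{P_1}(k_1)\,g_{P_2}(k_2)$ via a common integer lift, and the check $G(1)=1$.
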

\begin{proof}The proof follows directly from the Chinese remainder theorem.
\end{proof}

\subsection{Calculations for $\sum_{P}\mathfrak S_P^2$}
\begin{lemma}\label{lemma_sum1}
    For $k,d\in \N$ with $k$ square-free we have $$\sum_{\substack{P_0\in (\Z/k\Z)[t]\\\deg P_0\le d}}\prod_{\substack{\ell\mid k\\\ell\text{ prime}}}\left(2\frac{\omega_{P_0}(\ell)}{\ell}-\frac{\omega_{P_0}(\ell)^2}{\ell^2}\right)=\prod_{\substack{\ell|k\\\ell\text{ prime}}}\left(2\ell^d-2\ell^{d-1}+\ell^{d-2}\right)\,.
    $$\end{lemma}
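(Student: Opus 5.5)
By the Chinese remainder theorem the statement factorises over the primes dividing $k$. I will therefore reduce to a single prime and then compute the first two moments of the root count $\omega_{P_0}(\ell)$ over polynomials of degree at most $d$.

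\textbf{Reduction to one prime.} Set $g_{P_0}(k)\coloneqq \prod_{\ell\mid k}\bigl(2\omega_{P_0}(\ell)/\ell-\omega_{P_0}(\ell)^2/\ell^2\bigr)$ for square-free $k$. This is multiplicative in $k$, and each factor depends only on $P_0 \bmod \ell$. So Lemma~\ref{lemma_mult} applies, or directly: the map $(\Z/k\Z)[t]_{\le d}\to\prod_{\ell\mid k}(\Z/\ell\Z)[t]_{\le d}$ is a bijection under which the summand factorises. Hence the left-hand side equals
\[
\prod_{\ell\mid k}\ \sum_{\substack{P_0\in (\Z/\ell\Z)[t]\\ \deg P_0\le d}}\left(2\frac{\omega_{P_0}(\ell)}{\ell}-\frac{\omega_{P_0}(\ell)^2}{\ell^2}\right),
\]
and it suffices to prove that for each prime $\ell$ the inner sum equals $2\ell^d-2\ell^{d-1}+\ell^{d-2}$.

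\textbf{First moment.} The sum runs over all $\ell^{d+1}$ coefficient vectors, including the zero polynomial, for which $\omega=\ell$. Write $\omega_{P_0}(\ell)=\sum_{a\in\Z/\ell\Z}\mathbb 1_{P_0(a)=0}$. For fixed $a$, the condition $P_0(a)=0$ is one nontrivial linear condition on the coefficients, since the coefficient of $c_0$ is $1$. So $\#\{P_0:P_0(a)=0\}=\ell^d$, and $\sum_{P_0}\omega_{P_0}(\ell)=\ell\cdot\ell^d=\ell^{d+1}$.

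\textbf{Second moment.} Expand $\omega^2=\sum_{a,b}\mathbb 1_{P_0(a)=0}\mathbb 1_{P_0(b)=0}$. The diagonal $a=b$ contributes $\ell^{d+1}$. For $a\neq b$, the two conditions $P_0(a)=P_0(b)=0$ are independent linear conditions. This uses $d\ge1$: the $2\times2$ Vandermonde block in $(c_0,c_1)$ has determinant $b-a\ne0$. So there are $\ell^{d-1}$ such polynomials, and the off-diagonal terms contribute $\ell(\ell-1)\ell^{d-1}$. Thus $\sum_{P_0}\omega^2=\ell^{d+1}+\ell^{d+1}-\ell^{d}$.

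\textbf{Combining.} Dividing by the appropriate powers of $\ell$,
\[
2\cdot\frac{\ell^{d+1}}{\ell}-\frac{2\ell^{d+1}-\ell^d}{\ell^2}=2\ell^d-2\ell^{d-1}+\ell^{d-2},
\]
as required. Multiplying over $\ell\mid k$ gives the lemma.

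The only point needing care is the independence of the two evaluation conditions, which requires $d\ge 1$. Note that $\ell^{d-2}$ is non-integral when $d=1$, but the identity still holds as stated in that case. Everything else is bookkeeping.
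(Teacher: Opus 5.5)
Your proof is correct and is the paper's argument: multiplicativity via the Chinese remainder theorem (Lemma~\ref{lemma_mult}) reduces everything to a single prime $\ell$, where the identity follows from the moments $\sum_{P_0}\omega_{P_0}(\ell)=\ell^{d+1}$ and $\sum_{P_0}\omega_{P_0}(\ell)^2=\ell^{d}(2\ell-1)$. The only difference is that you prove these two moments directly, by counting solutions of linear conditions over $\Z/\ell\Z$, whereas the paper cites them from Skorobogatov--Sofos; you are also right that $d\ge 1$ is needed, since the identity fails for $d=0$.
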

    \begin{proof}
        This follows from Lemma~\ref{lemma_mult}, together with
\begin{equation}\label{sum1}
    \sum_{\substack{P\in (\Z/\ell\Z)[t]\\\deg P\le d}}\omega_P(\ell)=\ell^{d+1}\qquad\text{ and }\qquad
    \sum_{\substack{P\in (\Z/\ell\Z)[t]\\\deg P\le d}}\omega_P(\ell)^2=\ell^{d}(2\ell-1)\,,
\end{equation} which are proved in \cite[\S2]{SS}.
\end{proof}
\begin{proof}[Proof of Proposition~\ref{prop_est_Ssq}] 
By (\ref{def_tbh}) we have
$$\sum_{P\in \pol d H}\mfS_P^2(z)=\left(\prod_{\substack{\ell< z\\\ell\text{ prime}}}\frac {\ell^2} {(\ell-1)^2}\right)\sum_{P\in \pol d H}\prod_{\substack{\ell\mid \mcal{P}(z)\\\ell\text{ prime}}}\left(1-\frac {\omega_P(\ell)} \ell\right)^2\,.$$
  We apply Lemma~\ref{lemma_hooley} with $n=\mcal P(z)$, $f(k)=\prod_{\ell\mid k}\left(1-\frac{\omega_P(\ell)} \ell\right)^2$ and $\lambda_k^+$ as in Lemma~\ref{lemma_sieve}, with $y$ to be chosen later. Then $\hat f(k)=\prod_{\ell\mid k}\left(2\frac{\omega_P(\ell)} \ell-\frac{\omega_P(\ell)^2}{\ell^2}\right)$, and
 \begin{align*}
     \sum_{P\in \pol d H}\prod_{\substack{\ell\mid \mcal{P}(z)\\\ell\text{ prime}}}\left(1-\frac {\omega_P(\ell)} \ell\right)^2&\le \sum_{\substack{k\mid \mcal{P}(z)}}\lambda_k^+\sum_{P\in \pol d H}\prod_{\substack{\ell\mid k\\\ell\text{ prime}}}\left(2\frac{\omega_P(\ell)} \ell-\frac{\omega_P(\ell)^2}{\ell^2}\right)\,.
 \end{align*}
 Since the value of $\prod_{\substack{\ell\mid k}}\left(2\frac{\omega_P(\ell)} \ell-\frac{\omega_P(\ell)^2}{\ell^2}\right)$ depends only on the residue class of $P$ in $(\Z/k\Z)[t]$, we write the sum as
 \begin{align*}      
     &
      \sum_{\substack{k\mid \mcal{P}(z)}}\lambda_k^+\sum_{\substack{P_0\in (\Z/k\Z)[t]\\\deg P_0\le d}}\Bigg(\prod_{\substack{\ell\mid k\\\ell\text{ prime}}}\left(2\frac{\omega_{P_0}(\ell)} \ell-\frac{\omega_{P_0}(\ell)^2}{\ell^2}\right)\Bigg)\sum_{\substack{P\in \pol d H\\P\equiv P_0\,(k)}}1\\
     &=
      \sum_{\substack{k\mid \mcal{P}(z)}}\lambda_k^+\sum_{\substack{P_0\in (\Z/k\Z)[t]\\\deg P_0\le d}}\Bigg(\prod_{\substack{\ell\mid k\\\ell\text{ prime}}}\left(2\frac{\omega_{P_0}(\ell)} \ell-\frac{\omega_{P_0}(\ell)^2}{\ell^2}\right)\Bigg) \left(\frac{2^dH^{d+1}}{k^{d+1}}+O\left(\frac{H^d}{k^d}+1\right)\right)\,.\\
 \end{align*}
By Lemma~\ref{lemma_sieve} we have $\ord{\lambda_k^+}\le 1$ and $\lambda_k^+=0$ for $k\ge y$, hence the error term is bounded by
\begin{align*}
    \sum_{\substack{k\text{ square-free}\\k<y}}&\left(\frac{H^d}{k^d}+1\right)\sum_{\substack{P_0\in (\Z/k\Z)[t]\\\deg P_0\le d}}\prod_{\substack{\ell\mid k\\\ell\text{ prime}}}\left(2\frac{\omega_{P_0}(\ell)}\ell-\frac{\omega_{P_0}(\ell)^2}{\ell^2}\right)
    \ll
    \sum_{\substack{k\text{ square-free}\\k<y}}\left(\frac{H^d}{k^d}+1\right)\sum_{\substack{P_0\in (\Z/k\Z)[t]\\\deg P_0\le d}}2^{\omega(k)}\\
    &\ll H^d \sum_{\substack{k\text{ square-free}\\k<y}}k2^{\omega(k)}+\sum_{\substack{k\text{ square-free}\\k<y}}k^{d+1}2^{\omega(k)}\ll H^d y^2\log y+y^{d+2}\log y
     \end{align*}
due to the standard estimate $\sum\limits_{k\le y} 2^{\omega(k)}=O(y\log y)$.
By Lemma \ref{lemma_sum1}, the main term equals \begin{align*}
    & 2^dH^{d+1}\sum_{\substack{k\mid \mcal{P}(z)}}\lambda_k^+\prod_{\substack{\ell\mid k\\\ell\text{ prime}}}\frac {2\ell^2-2\ell+1} {\ell^3} \,.
\end{align*}
We apply Lemma \ref{lemma_sieve} with $h(\ell)=\frac{2\ell^2-2\ell+1}{\ell^3}$ and $\kappa=2$. Assumption (\ref{cond_Brun}) follows directly from Mertens' theorem. From (\ref{concl_sieve}) we obtain
\begin{align*}
 \sum_{\substack{k\mid \mcal{P}(z)}}\lambda_k^+\prod_{\substack{\ell\mid k\\\ell\text{ prime}}}\frac {2\ell^2-2\ell+1} {\ell^3}=\prod_{\substack{\ell<z\\\ell\text{ prime}}}\frac{(\ell-1)(\ell^2-\ell+1)}{\ell^3}\left(1+O\left( \exp\left({-\frac{\log y}{\log z}}\right)\right)\right)\,.\\
\end{align*}
Taking $y=\frac{H^{1/2}}{(\log H)^A}$ and assembling all the pieces together gives
\begin{align*}
    \frac{1}{2^dH^{d+1}}\sum_{P\in \pol d H}&\mathfrak S_P(z)^2\\
    &\le 
    \prod_{\substack{\ell< z\\\ell\text{ prime}}}\frac {\ell^2} {(\ell-1)^2}\left(\prod_{\substack{\ell<z\\\ell\text{ prime}}}\frac{(\ell-1)(\ell^2-\ell+1)}{\ell^3}\left(1+O\left(H^{-\frac 1 {2\log z}}\right)\right)+O\left(\frac{\log H}{(\log H)^{2A}}\right)\right)\\
    &
    =\prod_{\substack{\ell<z\\\ell\text{ prime}}}\left(1+\frac{1}{\ell(\ell-1)}\right)+O\left((\log z)^2H^{-\frac{1}{2\log z}}+\frac{(\log z)^2}{(\log H)^{2A-1}}\right)\,.
\end{align*}
The conclusion is now immediate by repeating the same process with $\lambda ^-$ instead of $\lambda^+$.
\end{proof}

\section{The term $\sum \psiabs_P\mathfrak S_P$}\label{sec_psiS}
 Given $X>0$ and $q,b\in \N$, define $$E(X;q,b)\coloneqq \sum\limits_{\substack{0<n\le   X\\n\equiv b\bmod q}}\Lambda(n)-\frac{  X}{\phi(q)}\mathbb 1_{\gcd(q,b)=1}\,.$$
 Let us recall Bombieri--Vinogradov's theorem \cite[\S28]{Da}.
 \begin{lemma}\label{lemma_BV}
     Let $A>5$ be fixed. Then for all $X>2$ we have 
             \begin{align*}
                 \sum_{q\le \frac{\sqrt {X}}{(\log   X)^{A}}}\max_{Y\le X}\max_{b\in (\Z/q\Z)^\times} \big|E(Y;q,b)\big|\ll\frac{X}{(\log X)^{A-5}}\,.
            \end{align*}
 \end{lemma}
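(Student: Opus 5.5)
The plan is to deduce the statement directly from the explicit form of the Bombieri--Vinogradov theorem in \cite[\S28]{Da}, by substituting our choice of modulus range; no new analytic input is needed. Davenport proves that for $X\ge 2$ and $X^{1/2}(\log X)^{-B}\le Q\le X^{1/2}$,
\begin{equation*}
\sum_{q\le Q}\max_{Y\le X}\max_{\gcd(b,q)=1}\Big|\psi(Y;q,b)-\frac{Y}{\phi(q)}\Big|\ll X^{1/2}Q(\log X)^5,
\end{equation*}
where $\psi(Y;q,b)=\sum_{n\le Y,\,n\equiv b\,(q)}\Lambda(n)$. The implied constant depends only on $B$. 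The $(\log X)^5$ comes from the large sieve combined with Vaughan's identity (or Gallagher's form in Davenport). The small-modulus range is handled by Siegel--Walfisz, which is where the ineffective dependence on $B$ enters.

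First, I will match notation. For $b\in(\Z/q\Z)^\times$ the indicator $\mathbb 1_{\gcd(q,b)=1}$ equals $1$, so $E(Y;q,b)=\psi(Y;q,b)-Y/\phi(q)$, exactly the quantity in Davenport's theorem. The condition $0<n$ is automatic since $\Lambda(n)$ is only defined for $n\ge1$. For $Y<2$ all terms are $O(1)$. Their total contribution is $O(Q)$, which is negligible.

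Next, I will take $Q=\sqrt X/(\log X)^{A}$ and $B=A$. Then $Q$ lies in the admissible range, and the bound becomes
\begin{equation*}
X^{1/2}\cdot\frac{X^{1/2}}{(\log X)^{A}}(\log X)^5=\frac{X}{(\log X)^{A-5}}.
\end{equation*}
This is the claim. The hypothesis $A>5$ is only needed so that the bound is nontrivial; it is not used in the argument.

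The only delicate point is whether the cited theorem is stated with $\max_{Y\le X}$ inside the sum and with precisely the exponent $5$ on the logarithm. If some formulation instead gives a weaker power $(\log X)^{c}$, the plan is to rerun the standard proof:
\begin{itemize}
\item reduce to primitive characters;
\item apply the large sieve inequality to type I and type II sums from Vaughan's identity, tracking the logarithmic losses;
\item treat moduli $q\le(\log X)^{B}$ by Siegel--Walfisz.
\end{itemize}
Only the constant $5$ in the exponent would be affected.
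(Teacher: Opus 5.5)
Your proposal is correct and matches the paper, which gives no argument beyond quoting Davenport's form of Bombieri--Vinogradov \cite[\S28]{Da}, namely $\sum_{q\le Q}\max_{Y\le X}\max_{(b,q)=1}|E(Y;q,b)|\ll_A X^{1/2}Q(\log X)^5$ for $X^{1/2}(\log X)^{-A}\le Q\le X^{1/2}$, and specialising it to $Q=\sqrt X/(\log X)^{A}$. Davenport's statement already has exactly the exponent $5$, so your fallback plan is unnecessary; the one extra piece of bookkeeping is that for $2<X\le e$ your $Q$ exceeds $X^{1/2}$, but there the bound is trivial.
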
 
\begin{proof}[Proof of Proposition~\ref{prop_est_psiS}]

Write $P(t)=c_dt^d+\ldots+c_0$. By (\ref{def_tbh}) we have
\begin{align*}
\sum_{P\in \pol d H}&\psiabs_P(x)\mfS_P(z)
=\left(\prod_{\substack{\ell< z\\\ell\text{ prime}}}\frac \ell {\ell-1}\right)\sum_{P\in \pol d H}\psiabs_P(x)\prod_{\substack{\ell<z\\\ell\text{ prime}}}\left(1-\frac {\omega_P(\ell)} \ell\right)\\
&=
\left(\prod_{\substack{\ell< z\\\ell\text{ prime}}}\frac \ell {\ell-1}\right)\sum_{m\le x}\sum_{\substack{0<c_d\le H\\\ord{c_1}, \ldots,\ord{c_{d-1}}\le H}}\sum_{\substack{\ord{c_0}\le H\\P(m)\neq 0}}\Lambda\left(\ord{c_0+c_1m+\ldots+c_dm^d}\right)\prod_{\substack{\ell<z\\\ell\text{ prime}}}\left(1-\frac {\omega_P(\ell)} \ell\right)
\,.\end{align*}
Let $N\coloneqq c_1m+\ldots+c_dm^d$, so that $P(m)=c_0+N$. We apply Lemma~\ref{lemma_hooley} with $n=\mcal P(z)$, $f(k)=\prod_{\ell\mid k}\left(1-\frac{\omega_P(\ell)} \ell\right)$ and $\lambda_k^+$ as in Lemma \ref{lemma_sieve} with $y=\frac{H^{1/2}}{(\log H)^A}$. Then $\hat f(k)=\frac{\omega_P(k)}{k}$ and
    \begin{align*}
        \sum_{\substack{\ord{c_0}\le H\\c_0+N\neq 0}}\Lambda(\ord{c_0+N})\prod_{\substack{\ell<z\\\ell\text{ prime}}}&\left(1-\frac {\omega_P(\ell)} \ell\right)
        \le
        \sum_{k\mid \mcal{P}(z)}\frac{\lambda^+_k}{k}\sum_{\substack{\ord{c_0}\le H\\c_0+N\neq 0}}\Lambda(\ord{c_0+N})\,\omega_P(k)\,.
        \end{align*}
        We can introduce into the sum over $c_0$ the condition $\gcd(c_0+N,k)=1$ since
        
        \begin{align*}
            \sum_{k\mid \mcal{P}(z)}\frac{\ord{\lambda^+_k}}{k}\sum_{\substack{\ord{c_0}\le H,\, c_0+N\neq 0\\\gcd(c_0+N,k)\neq 1}}&\Lambda(\ord{c_0+N})\,\omega_P(k)\le \sum_{k\le y}\sum_{\substack{\ord{c_0}\le H,\, c_0+N\neq 0\\\gcd(c_0+N,k)\neq 1}}\Lambda(\ord{c_0+N})\\
            &\ll
            \sum_{k\le y}\sum_{\substack{\ell\mid k\\\ell\text{ prime}}}\sum_{\alpha\ll\log H}(\log H)  \#\{c_0\in [1,H]: c_0+N=p^\alpha\}\\
            &\ll(\log H)^2 \sum_{k\le y}\omega(k)\ll y(\log y)(\log H)^2\ll \frac{H^{1/2}(\log H)^3}{(\log H)^A}
            \,,
        \end{align*}
        where we have used the standard estimate $\omega(k)\ll \log k$ together with $\ord{\lambda_k^+}\le 1$  and $\lambda_k^+=0 $ for $k\ge y$ from Lemma \ref{lemma_sieve}.
        Thus,
        \begin{align*}
        \sum_{m\le x}\sum_{\substack{0<c_d\le H\\\ord{c_1}, \ldots,\ord{c_{d-1}}\le H}}\sum_{k\mid \mcal P(z)}\frac{\ord{\lambda^+_k}}{k}\sum_{\substack{0< c_0\le H,\, c_0+N\neq 0\\\gcd(c_0+N,k)\neq 1}}\Lambda(\ord{c_0+N})\,\omega_P(k)\ll \frac{xH^{d+1/2}(\log H)^3}{(\log H)^A}    
        \end{align*}
        Going back to the main term,
        \begin{align*}
        \sum_{k\mid \mcal{P}(z)}\frac{\lambda^+_k}{k}\sum_{\substack{\ord{c_0}\le H\\c_0+N\neq 0\\\gcd(P(m),k)=1}}\Lambda(\ord{c_0+N})\,\omega_P(k)
        &=
        \sum_{k\mid \mcal{P}(z)}\frac{\lambda^+_k}{k}\sum_{\substack{\ord{c_0}\le H\\c_0+N\neq 0\\\gcd(P(m),k)=1}}\Lambda(\ord{c_0+N})\sum_{\eta=1}^k\mathbb 1_{P(\eta)\equiv 0\bmod k}
        \,.
        \end{align*}
        Let $a\coloneqq c_1\eta+\ldots +c_d\eta^d$, so that $P(\eta)=c_0+a$. Recalling that $P(m)=c_0+N$, the two conditions $P(\eta)\equiv 0\bmod k$ and $\gcd(P(m),k)=1$ can be rewritten as $\gcd(N-a,k)=1$ and $c_0\equiv -a\bmod k$. Thus, we can write
        \begin{align*}
             \,&\sum_{k\mid \mcal{P}(z)}\frac{\lambda^+_k}{k}\sum_{\substack{\eta=1\\\gcd(N-a,k)=1}}^k\sum_{\substack{-H\le c_0\le H\\c_0+N\neq 0\\c_0\equiv -a\bmod k}}\Lambda(\ord{c_0+N})
             =
             \sum_{k\mid \mcal{P}(z)}\frac{\lambda^+_k}{k}\sum_{\substack{\eta=1\\\gcd(N-a,k)=1}}^k\sum_{\substack{-H+N\le n\le H+N\\n\neq 0\\n\equiv N -a\bmod k}}\Lambda(\ord{n})\\
             &=
              \sum_{k\mid \mcal{P}(z)}\frac{\lambda^+_k}{k}\sum_{\substack{\eta=1\\\gcd(N-a,k)=1}}^k\left[\frac{2H+O(1)}{\phi(k)}\mathbb 1_{\gcd(N-a,k)=1}+E(\ord{H+N};k,N-a)-E(\ord{N-H};k,N-a)\right]\,.
             \end{align*}
             Since $\ord{N-H},\ord{H+N}\le Hm^d$,  we have  $$\big|E(\ord{N-H};k,N-a)\big|,\big|E(\ord{N+H};k,N-a)\big|\le \max_{Y\le Hm^d}\max_{b\in (\Z/k\Z)^\times}\big|{E(Y;k,b)}\big|\,.$$ By Lemma \ref{lemma_BV} with $X=(d+1)Hm^d$ we obtain 
             \begin{align*}
                 \sum_{m\le x}\sum_{\substack{0<c_d\le H\\\ord{c_1}, \ldots,\ord{c_{d-1}}\le H}}&
              \sum_{k\mid \mcal{P}(z)}\frac{\ord{\lambda^+_k}}{k}\sum_{\substack{\eta=1\\\gcd(N-a,k)=1}}^k\big|E(\ord{N+H};k,N-a)-E(\ord{N-H};k,N-a)\big|\\
              &\ll
                 \sum_{m\le x}\sum_{\substack{0<c_d\le H\\\ord{c_1}, \ldots,\ord{c_{d-1}}\le H}} \sum_{k<y}\big|E(\ord{N+H};k,N-a)\big|+\big|E(\ord{N-H};k,N-a)\big|\\
              &\ll
              \sum_{m\le x}\sum_{\substack{0<c_d\le H\\\ord{c_1}, \ldots,\ord{c_{d-1}}\le H}}\frac{Hm^d}{(\log H)^{A-5}}
              \ll
              \frac{H^{d+1}x^{d+1}}{(\log H)^{A-5}}\,.
             \end{align*}
        Moving to the main term, 
        \begin{align*}
            2H\sum_{m\le x}\sum_{\substack{0<c_d\le H\\\ord{c_1}, \ldots,\ord{c_{d-1}}\le H}}
              \sum_{k\mid \mcal{P}(z)}&\frac{{\lambda^+_k}}{k\phi(k)}\sum_{\substack{\eta=1\\\gcd(N-a,k)=1}}^k 1\\ 
              &=
              2H\sum_{m\le x}\sum_{k\mid \mcal{P}(z)}\frac{{\lambda^+_k}}{k\phi(k)}\#\left\{\begin{array}{l}
                c_1,\ldots,c_{d-1}\in [-H,H]\\
                c_d\in [1,H]\\
                \eta\in [1,\ldots, k]\end{array}\:\middle|\;\begin{array}{c} \gcd(N-a,k)=1
            \end{array}\right\}\\ 
              \end{align*}
              Since $N-a=(m-\eta)\left(c_1+c_2\frac{m^2-\eta^2}{m-\eta}+\ldots+c_d\frac{m^d-\eta^d}{m-\eta}\right)$, the condition $\gcd(N-a,k)=1$ is equivalent to $$\gcd(m-\eta,k)=1\qquad \text{and}\qquad \gcd\left(c_1+c_2\frac{m^2-\eta^2}{m-\eta}+\ldots+c_d\frac{m^d-\eta^d}{m-\eta},k\right)=1\,.$$
               To count the cardinality of the set above, note that one can freely choose $b_1,b_2\in (\Z/k\Z)^\times$ in $\phi(k)^2$ ways and impose the two conditions $$m-\eta\equiv b_1\bmod k\qquad \text{and}\qquad c_1+c_2\frac{m^2-\eta^2}{m-\eta}+\ldots+c_d\frac{m^d-\eta^d}{m-\eta}\equiv b_2\bmod k\,.$$ Then the first condition is satisfied for exactly one choice of $\eta$, while the second one, since $c_1$ has coefficient $1$, is satisfied by $\frac{2^{d-1}H^d} k+O\left(H^{d-1}\right)$ choices of $c_1,\ldots,c_d$. Therefore,
              \begin{align*}
              2H\sum_{m\le x}\sum_{k\mid \mcal{P}(z)}\frac{{\lambda^+_k\phi(k)}}{k}\left[\frac{2^{d-1}H^d}{k}+O\left(H^{d-1}\right)\right]
              =
              2^dH^{d+1}\left(x+O(1)\right)\sum_{k\mid \mcal{P}(z)}\lambda^+_k\frac{\phi(k)}{k^2}+O\left(\frac{xH^{d+1/2}}{(\log H)^A}\right)\,.
        \end{align*}
    We apply Lemma \ref{lemma_sieve} to the main term with $h(\ell)=\frac{\ell-1}{\ell^2}$ and $\kappa=1$. Assumption (\ref{cond_Brun}) follows by Mertens' theorem. From (\ref{concl_sieve}) we obtain

\begin{align*}
 \sum_{\substack{k\mid \mcal{P}(z)}}\lambda_k^+\prod_{\substack{\ell\mid k\\\ell\text{ prime}}}\frac {\ell-1} {\ell^2}=\prod_{\substack{\ell<z\\\ell\text{ prime}}}\frac{(\ell^2-\ell+1)}{\ell^2}\left(1+O\left( H^{-\frac{1}{2\log z}}\right)\right)\,.\\
\end{align*}
    Assembling all the pieces together gives
    \begin{align*}
        \frac{1}{2^dH^{d+1}x}\sum_{P\in \pol d H}&\psiabs_P(x)\mfS_P(z)&\\
        &\le
        \prod_{\substack{\ell< z\\\ell\text{ prime}}}\frac \ell {\ell-1}\left[\prod_{\substack{\ell<z\\ \ell\text{ prime}}}\left(\frac{\ell^2-\ell+1}{\ell^2}\right)\left(1+O\left(H^{-\frac 1 {2\log z}}\right)\right)+O\left(\frac{(\log H)^{\delta d}}{(\log H)^{A-5}}\right)\right]\\
        &
        =\prod_{\substack{\ell<z\\ \ell\text{ prime}}}\frac{\ell^2-\ell+1}{\ell(\ell-1)}+O\left((\log z)\,{H^{-\frac 1 {2\log z}}}+\frac{\log z}{(\log H)^{A-\delta d-5}}\right)\,.
    \end{align*}
    The conclusion follows by repeating the same process with $\lambda^-$ instead of $\lambda^+$.
\end{proof}


\section{Transition from $\psiabs$ to $\psi$}\label{sec_trans}

\begin{lemma}\label{lemma_neg}In the context of Theorem \ref{teo_main}, we have
    \begin{align*}
        \sum_{P\in \pol d H}\Bigg|\sum_{\substack{m\le x\\P(m)<0}}\Lambda(-P(m))\Bigg|^2\asymp H^{d+1}\Big(x(\log x)+(\log H)(\log x)\Big) \,.
    \end{align*}
\end{lemma}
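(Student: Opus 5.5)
The plan is to expand the square as in \S\ref{sec_psisq}, into a diagonal part and a non-diagonal part, but now keeping track of the extra condition $P(m)<0$. Writing $N=c_1m+\dots+c_dm^d$, we have $P(m)<0$ if and only if $c_0<-N$. We then determine the size of each part by computing the volume of the region of coefficients where the relevant sign conditions hold. Lower and upper bounds are handled together: every main term is given by a volume, and all the error terms are the same ones as in \S\ref{sec_psisq}.

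\emph{Diagonal part.} Fix $m\le x$ and $c_1,\dots,c_d$. The sum over $c_0$ runs over the interval $c_0\in[-H,\min(H,-N-1)]$. The arguments of Lemmas~\ref{lemma_d1}--\ref{lemma_s1}, applied on this subinterval, give
\[
\sum_{c_0}\Lambda(-P(m))^2=\bigl(\log H+O(\log\log H)\bigr)\,\#\{c_0\in[-H,H]:c_0<-N\}+O\Bigl(\frac{Hx^d}{(\log H)^A}\Bigr).
\]
Indeed, the prime number theorem in short intervals of length $\gg H/(\log H)^{A}$ is all that is needed, and shorter intervals contribute trivially. So the diagonal part is $\asymp (\log H)\sum_{m\le x}\#\{P\in\pol dH: P(m)<0\}$. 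The key geometric input is the volume estimate
\[
\#\{P\in\pol dH:P(m)<0\}\asymp H^{d+1}/m .
\]
For $m$ large, the sign of $P(m)$ is governed by $c_dm+c_{d-1}$ together with the rescaled lower-order terms. Hence $P(m)<0$ forces $c_d\ll H/m$, and conversely a positive proportion of the set $\{c_d\le H/(Cm)\}$ satisfies $P(m)<0$. Summing over $m\le x$ gives the term $H^{d+1}(\log H)(\log x)$.

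\emph{Non-diagonal part.} For $m_1<m_2$, fix $c_2,\dots,c_d$. The conditions $P(m_1)<0$ and $P(m_2)<0$ are two half-planes in the $(c_0,c_1)$-plane, so their intersection with $[-H,H]^2$ is a convex polygon $R$. The next step is a version of Proposition~\ref{prop_ss2} in which the box is replaced by a convex region $R$ with boundary of length $O(H)$. The main term then becomes $\mathrm{vol}(R)\,\frac{m_2-m_1}{\phi(m_2-m_1)}$, with the same error term $O(H^2(\log H)^{-A})$. This should follow from the same circle-method argument, either by approximating $R$ by $O((\log H)^{2A})$ small boxes or by inserting smoothed indicator functions. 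Summing over $c_2,\dots,c_d$ reduces the non-diagonal part to
\[
\sum_{m_1<m_2\le x}\frac{m_2-m_1}{\phi(m_2-m_1)}\#\{P:P(m_1)<0,\ P(m_2)<0\}.
\]
Since $t/\phi(t)\asymp 1$ on average, it remains to show that
\[
\sum_{m_1<m_2\le x}\#\{P:P(m_1),P(m_2)<0\}\asymp H^{d+1}x\log x .
\]

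\emph{Main obstacle.} The hardest step is this last volume computation for pairs. Negativity at both points forces roughly $c_d\ll H/m_2$, and the lower-order coefficients must also conspire. My first attempt would be to parametrise by the largest real root $\rho$ of $P$ in $[1,x]$. Then $P(m)<0$ holds for essentially all $m<\rho$ outside a bounded number of sign changes, and the pair count is $\asymp\sum_{\rho}\rho^2\cdot(\text{density of }\rho)$. This identifies the density of the largest root near $\rho$ as $\asymp H^{d+1}/\rho^{2}$, except for a $\log$-type contribution from polynomials with $c_d$ very small compared with $c_{d-1}$. That contribution is where I would expect the $\log x$ to come from, although I am not certain of this. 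If the heuristic via $\rho$ fails, the fallback is to compute the volume directly, first in the variables $(c_d,c_{d-1})$ with $c_{d-2},\dots,c_0$ treated as perturbations, and only afterwards summing over $m_1,m_2$.
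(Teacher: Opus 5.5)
The genuine gap is the target you set for the non-diagonal part. Your diagonal argument is sound, and in fact sharp: $\#\{P\in\pol dH:P(m)<0\}\asymp H^{d+1}/m$. The upper bound comes from $c_d\ll H/m$. The lower bound comes from the box $c_d\le H/(2m)$, $c_0,\dots,c_{d-1}\in[-H,-H/2]$, on which $P(m)\le c_dm^d-\tfrac H2(m^{d-1}+\dots+1)<0$. The estimate
\[
\sum_{m_1<m_2\le x}\#\{P\in\pol dH:\ P(m_1)<0,\ P(m_2)<0\}\asymp H^{d+1}x\log x
\]
is false, and your own remark refutes it. For $m_2\ge 2$ one has $P(m_2)\ge c_dm_2^d-H\frac{m_2^d-1}{m_2-1}$. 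So $P(m_2)<0$ forces $c_d<H/(m_2-1)\le 2H/m_2$, uniformly in all the other coefficients. Each pair therefore contributes $\ll H^{d+1}/m_2$, and the sum is $\ll H^{d+1}x$.

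There is no extra logarithm coming from polynomials with $c_d$ small relative to $c_{d-1}$. Your own largest-root heuristic gives $\sum_{\rho\le x}\rho^2\cdot H^{d+1}\rho^{-2}\asymp H^{d+1}x$. The weights do not help either, since $\sum_{m_2\le x}m_2^{-1}\sum_{t<m_2}t/\phi(t)\asymp x$. Combining this with Propositions~\ref{prop_ss2} and~\ref{prop_est_diag_gen} shows that the left-hand side of the lemma is
\[
\ll H^{d+1}\bigl(x+(\log H)(\log x)\bigr).
\]
Hence the claimed lower bound $\gg H^{d+1}x\log x$ fails whenever $x/\log H\to\infty$; for $x=(\log H)^2$ it is off by a factor $\asymp\log\log H$. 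Your ``main obstacle'' is thus an attempt to prove a false estimate. With a sub-box version of Proposition~\ref{prop_ss2}, the off-diagonal part is in fact $\asymp H^{d+1}x$.

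What is true, and all that is used later, is the upper bound: the lemma enters the final proposition of \S\ref{sec_trans} only inside an $O(\cdot)$. For the upper bound you do not need your convex-region version of Proposition~\ref{prop_ss2}. Since $\Lambda\ge 0$, once $c_d$ is restricted you can drop the sign conditions on $(c_0,c_1)$ and apply Proposition~\ref{prop_ss2} on the full box, which is what the paper does. The paper restricts to $c_d<dH/\min\{m_1,m_2\}$. Using $m_1$ rather than $m_2$ is what produces the non-sharp factor $\sum 1/m_1\ll\log x$, and hence the term $x\log x$.

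The paper's one-line lower-bound sketch does not deliver the stated order either. As written, the condition $c_dm^d\le H/2$ only gives $\gg H^{d+1}/m^d$ polynomials for each $m$. Even with the correct condition $c_dm\le H/2$ (and $c_dm_2\le H/2$ for pairs), it can only give $\gg H^{d+1}\bigl(x+(\log H)(\log x)\bigr)$. So keep your diagonal argument and replace the non-diagonal target by $H^{d+1}x$. For the lower bound, a version of Proposition~\ref{prop_ss2} on the sub-box $(c_0,c_1)\in[-H,-H/2]^2$ suffices; no convex polygons are needed. Then state the lemma with $\ll$, or with the corrected order $H^{d+1}\bigl(x+(\log H)(\log x)\bigr)$.
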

\begin{proof}
    We expand the square to get
    \begin{align*}
        \sum_{P\in \pol d H}\sum_{\substack{m\le x\\P(m)<0}}\Lambda(-P(m))^2\,+2\sum_{P\in \pol d H}\sum_{\substack{m_1<m_2\le x\\P(m_1),P(m_2)<0}}\Lambda(-P(m_1))\Lambda(-P(m_2))\,.
    \end{align*}
    Writing $P(m)=c_dm^d+\ldots+c_0$, note that $P(m)> c_dm^d-dHm^{d-1}$, and so $P(m)<0$ implies $c_d<\frac{Hd}{m}$. Thus, applying Proposition \ref{prop_est_diag_gen} with $c_1,\ldots,c_{d-1}\in [-H,H]$ and $c_d\in \left(0,\frac{dH}{m}\right)$, we have
     \begin{align*}
         \sum_{\substack{m\le x}}\sum_{\substack{P\in \pol d H\\P(m)<0}} \Lambda(-P(m))^2
         &\le
         \sum_{m\le x}\sum_{\substack{P\in\pol d H\\c_d<\frac {Hd}{m}}}\Lambda(\ord{P(m)})^2
         \ll 
         \sum_{m\le x}\frac{H^{d+1}}{m}(\log H)\\
         &\ll H^{d+1}(\log H)(\log x)\,.
     \end{align*}
     This is acceptable since $\log H\le x\le (\log H)^\delta$ with $\delta\ge 1$.
     Concerning the non-diagonal part, repeating the same idea gives  $$\{P\in \pol d H: P(m_1),P(m_2)<0\}\subseteq \left\{P\in \pol d H: c_d<\frac{Hd}{\min\{m_1,m_2\}}\right\}\,,$$ and thus, by  Proposition \ref{prop_ss2} with $c_2,\ldots,c_{d-1}\in [-H,H]$, $m_1\le m_2$ and $c_d\in \left(0,\frac{Hd}{m_1}\right)$, we have 
     \begin{align*}
         \sum_{P\in \pol d H}\sum_{\substack{m_1<m_2\le x\\P(m_1),P(m_2)<0}}&\Lambda(-P(m_1))\Lambda(-P(m_2))
         \le
         \sum_{m_1<m_2\le x}\sum_{\substack{P\in \pol d H\\c_d<\frac {Hd}{m_1}}}\Lambda(\ord{P(m_1)})\Lambda(\ord{P(m_2)})\\
         &\ll H^{d+1}
         \sum_{m_1<m_2\le x}\frac{m_2-m_1}{m_1\phi(m_2-m_1)}=H^{d+1}\sum_{1\le t\le x}\frac{t}{\phi(t)}\sum_{1\le m_1\le x-t}\frac{1}{m_1}\\&
         \ll
         H^{d+1} \sum_{1\le t\le x}\frac{t}{\phi(t)}(\log x)
         \ll 
         H^{d+1}x(\log x)\,.\qedhere
     \end{align*}
    The lower bound is obtained by similar calculations, since $c_d m^d\le H/2$ and $c_{d-1}\ldots,c_0\in [-H,-H/2]$ imply $P(m)<0$.
\end{proof}

\begin{prop}In the context of Theorem \ref{teo_main}, we have
    \begin{align*}
        \sum_{P\in \pol d H}\ord{\psi_P(x)-x\mfS_P(x)}^2=\sum_{P\in \pol d H}& \ord{\psiabs_P(x)-x\mfS_P(x)}^2\\&+{O\left(H^{d+1}\bigg(x\sqrt{(\log H)(\log x)}+(\log H)\sqrt{x(\log x)}\bigg)\right)}\,.
    \end{align*}
    \begin{proof} By (\ref{def_psiabs}) we can write $$\psi_P(x)-x\mfS_P(x)=\psiabs_P(x)-x\mfS_P(x)-\sum\limits_{\substack{m\le x\\P(m)<0}}\Lambda(-P(m))$$ and by the approximation $(a-b)^2=a^2+O\left(\ord{ab}+\ord b^2\right)$, valid for $a,b\in \R$, the desired sum equals
        \begin{align*}
             \sum_{P\in \pol d H}\ord{\psiabs_P(x)-x\mfS_P(x)}^2&+O\Bigg(\sum_{P\in \pol d H}\ord{\psiabs_P(x)-x\mfS_P(x)}\cdot \Bigg|\sum_{\substack{m\le x\\P(m)<0}}\Lambda(-P(m))\Bigg|\Bigg)\\
             &+O\Bigg(\sum_{P\in \pol d H}\Bigg|\sum_{\substack{m\le x\\P(m)<0}}\Lambda(-P(m))\Bigg|^2\Bigg)       \,.
        \end{align*}
        Also, by Cauchy's inequality, the middle term is at most\begin{align*}
            \le \left(\sum_{P\in \pol d H}\ord{\psiabs_P(x)-x\mfS_P(x)}^2\right)^{1/2}\Bigg(\sum_{P\in \pol d H}\Bigg|\sum_{\substack{m\le x\\P(m)<0}}\Lambda(-P(m))\Bigg|^2\Bigg)^{1/2}\,.
        \end{align*}
        Using Lemma \ref{lemma_neg} together with Theorem \ref{teo_mainabs} leads to the result.
    \end{proof}

\end{prop}


\bibliographystyle{plainurl}
\bibliography{sample}

\end{document}